\theoremstyle{plain}  
\newtheorem{theorem}{Theorem}[section]
\newtheorem{lemma}[theorem]{Lemma}
\newtheorem{proposition}[theorem]{Proposition}
\def\QEDmark{\ensuremath{\square}}
\theoremstyle{definition}
\theoremstyle{remark} 
\newtheorem{example}{Example}[section]
\newtheorem{remark}{Remark}[section]
\newcommand{\nsphere}{{\mathcal S^{n-1}}}
\newcommand{\coloneqq}{\mathrel{\mathop:}=}
\newcommand{\bachir}[1]{{\color{black}#1}}
\newcommand{\aaa}[1]{{\color{black}#1}}
 \author{
 Amir Ali Ahmadi and  Bachir El Khadir \thanks{The authors are with  the department of Operations Research and Financial Engineering at Princeton University. Email:
  \{\texttt{a\_a\_a}, \texttt{bkhadir}\}\texttt{@princeton.edu}.\newline
 This work was partially supported by the DARPA Young Faculty Award, the Young Investigator Award of the AFOSR, the CAREER Award of the NSF, the Google Faculty Award, and the Sloan Fellowship.}
 }
\date{\today}
\title{\LARGE \bf On Algebraic Proofs of Stability\\ for Homogeneous Vector Fields}
\begin{document}
\date{}
\maketitle

\begin{abstract}
We prove that if a homogeneous, continuously differentiable vector field is asymptotically stable, then it admits a  Lyapunov function which is the ratio of two polynomials (i.e., a \emph{rational function}). We further show that when the vector field is polynomial, the Lyapunov inequalities on both the rational function and its derivative have \emph{sum of squares certificates} and hence such a Lyapunov function can always be found by \emph{semidefinite programming}. This generalizes the classical \aaa{fact} that an asymptotically stable linear system admits a quadratic Lyapunov function which satisfies a certain linear matrix inequality. In addition to homogeneous vector fields, the result can be useful for showing local asymptotic stability of non-homogeneous systems by proving asymptotic stability of their lowest order homogeneous component.

\aaa{This paper also includes some negative results: We show that (i) in absence of homogeneity, globally asymptotically stable polynomial vector fields may fail to admit a global rational Lyapunov function, and (ii) in presence of homogeneity, the degree of the numerator of a rational Lyapunov function may need to be arbitrarily high (even for vector fields of fixed degree and dimension).} On the other hand, we also give a family of homogeneous polynomial vector fields that admit a low-degree rational Lyapunov function but necessitate polynomial Lyapunov functions of arbitrarily high degree. This shows the potential benefits of working with rational Lyapunov functions, particularly as the ones whose existence we guarantee have structured denominators and are not more expensive to search for than polynomial ones. 
\end{abstract}


{\bf Index Terms.} Converse Lyapunov theorems, nonlinear dynamics, algebraic methods in control, semidefinite programming, \aaa{rational Lyapunov functions}.

\maketitle

\section{Introduction and outline of contributions}

We are concerned in this paper with a continuous time dynamical system

\begin{equation}
\label{eq:CT.dynamics}
\dot{x}=f(x),
\end{equation}
where \(f:\mathbb{R}^n\rightarrow\mathbb{R}^n\) is continuously differentiable and has an equilibrium at the origin, i.e., \(f(0)=0\). The problem of deciding asymptotic stability of equilibrium points of such systems is a fundamental problem in control theory. The goal of this paper is prove that if \(f\) is a homogeneous vector field (see the definition below), then asymptotic stability is equivalent to existence of a Lyapunov function that is the ratio of two polynomials (i.e., a rational function). We also address the computational question of finding such a Lyapunov function in the case where the vector field \(f\) is polynomial.

A scalar valued function \(p:\mathbb{R}^n\rightarrow\mathbb{R}\) is said to be \emph{homogeneous} of degree \(d > 0\) if it satisfies \(p(\lambda x)=\lambda^d p(x)\) for all \(x\in\mathbb{R}^n\) and all \(\lambda\in\mathbb{R}\). Similarly, we say that a vector field  \(f: \mathbb R^n \rightarrow \mathbb R^n\) is homogeneous  of degree \(d > 0\) if \(f(\lambda x) = \lambda^d f(x)\) for all \(x\in\mathbb{R}^n\) and all \(\lambda\in\mathbb{R}\). \aaa{Homogeneous vector fields have been extensively studied in the literature on nonlinear control; see e.g.  \cite{Stability_homog_poly_ODE}, \cite{Stabilize_Homog}, \cite[Sect. 57]{Hahn_stability_book}, \cite{homog.feedback}, \cite{Baillieul_Homog_geometry},\aaa{\cite{efimov_numerical_2017}, \cite{efimov_inclusions}}, \cite{Cubic_Homog_Planar}, \cite{collins1996algebraic}, \cite{argemi1968points}, \cite{cima1990algebraic}, \cite{HomogHomog}, \cite{homog.systems}, \cite{feyzmahdavian2014asymptotic}, \cite{Zubov1964methods}, \cite{Kawski1988stability}. These systems are not only of interest as is: they can also be used to study properties of related \emph{non-homogeneous} systems. For example, if one can show that the vector field corresponding to the lowest-degree nonzero homogeneous component of the Taylor expansion of a smooth nonlinear vector field is asymptotically stable, then the vector field itself will be locally asymptotically stable.}

We recall that the origin of (\ref{eq:CT.dynamics}) is said to be \emph{stable in the sense of Lyapunov} if for every \(\epsilon>0\), there exists a \(\delta=\delta(\epsilon)>0\) such that $$\|x(0)\|<\delta\ \Rightarrow \|x(t)\|<\epsilon, \ \ \forall t\geq0.$$ We say that the origin is \emph{locally asymptotically stable} if it is stable in the sense of Lyapunov and if there exists a scalar \(\hat \delta > 0\) such that $$\|x(0)\|< \hat \delta \ \Rightarrow \ \lim_{t\rightarrow\infty}x(t)=0.$$ The origin is \emph{globally asymptotically stable} if it is stable in the sense of Lyapunov and  \(\lim_{t\rightarrow\infty}x(t)=0\) for any initial condition in \(\mathbb{R}^n\). A basic fact about homogeneous vector fields is that for these systems the notions of local and global asymptotic stability are equivalent. Indeed, the values that a homogeneous vector field \(f\) takes on the unit sphere determines its value everywhere.

It is also well known that the origin of (\ref{eq:CT.dynamics}) is globally asymptotically stable if there exists a continuously differentiable Lyapunov function \(V: \mathbb R^n \rightarrow \mathbb R\) which is radially unbounded (i.e., satisfies \(V(x) \rightarrow \infty\) as $\|x\| \rightarrow \infty$), vanishes at the origin, and is such that

\begin{align}
\label{eqn:Lyapunov_ineq_pd}
V(x) > 0 &\quad \forall x \ne 0
\end{align}
\begin{align}
\label{eqn:Lyapunov_ineq_grad_pd}
-\langle \nabla V(x), f(x) \rangle > 0&\quad \forall x \ne 0.
\end{align}

\aaa{Throughout this paper, whenever we refer to a \emph{Lyapunov function}, we mean a function satisfying the aforementioned properties.} We say that $V$ is \emph{positive definite} if it satisfies (\ref{eqn:Lyapunov_ineq_pd}). When $V$ is a homogeneous function, the inequality (\ref{eqn:Lyapunov_ineq_pd}) can be replaced by $$V(x) > 0 \quad \forall x \in \nsphere,$$ where \(\nsphere\) here denotes the unit sphere of \(\mathbb R^{n}\). It is straightforward to check that a positive definite homogeneous function is automatically radially unbounded.

\aaa{The first contribution of this paper is to show that an asymptotically stable homogeneous and continuously differentiable vector field always admits a Lyapunov function which is a \emph{rational function} (Theorem \ref{thm:rat_lyap_for_hom_gas}). This is done by utilizing a well-known result on existence of homogeneous Lyapunov functions \cite{HomogHomog}, \cite{Hahn_stability_book}, \cite{Zubov1964methods}, \cite{Kawski1988stability} and proving a statement on simultaneous approximation of homogeneous functions and their derivatives by homogeneous rational functions (Lemma~\ref{lem:approx_hom_by_rational}).}

\subsection{Polynomial vectors fields}

\aaa{We pay special attention in this paper to the case where the vector field $f$ in (\ref{eq:CT.dynamics}) is polynomial.} Polynomial differential equations appear ubiquitously in applications---either as true models of physical systems or as approximations of other families of nonlinear dynamics---\aaa{and have received a lot of attention in recent years because of the advent of promising analysis techniques using sum of squares optimization  \cite{PhD:Parrilo}, \cite{PapP02}, \cite{PositivePolyInControlBook}, \cite{AndyPackard2003}, \cite{Chesi2010LMI}, \cite{ChesiHenrion2009SpecialIEEE}, \cite{kamyar2014polynomial}.} In a nutshell, these techniques allow for an automated search over (a subset of) polynomial Lyapunov functions of bounded degree using semidefinite programming. However, there are comparatively few converse results in the literature \aaa{(e.g. those in \cite{Peet.exp.stability}, \cite{Peet.Antonis.converse.sos.journal}, \cite{tac_converse_sos}, \cite{AAA2011Converse})} on guaranteed existence of such Lyapunov functions.

In \cite{AAA_MK_PP_CDC11_no_Poly_Lyap}, the authors prove that there are globally asymptotically stable polynomial vector fields (of degree as low as 2) which do not admit polynomial Lyapunov functions. \aaa{We show in this paper that the same example in \cite{AAA_MK_PP_CDC11_no_Poly_Lyap} does not even admit a rational Lyapunov function (Section \ref{sec:non-existence-rational-Lyapunov-function}). This counterexample justifies the homogeneity assumption of our Theorem \ref{thm:rat_lyap_for_hom_gas}.}

\aaa{Homogeneous polynomial vector fields of degree 1 are nothing but linear systems. In this case, it is well known that asymptotic stability is equivalent to existence of a (homogeneous) quadratic Lyapunov function (see e.g. \cite[Thm. 4.6]{Khalil:3rd.Ed}) and can be checked in polynomial time.}
Moving up in the degrees, one can show that homogeneous vector fields of even degree can never be asymptotically stable \cite[Sect. 17]{Hahn_stability_book}. When the degree of $f$ is odd and $\ge 3$, testing asymptotic stability of (\ref{eq:CT.dynamics}) is not a trivial problem. In fact, already when the degree of $f$ is equal to 3 (and even if we restrict $f$ to be a gradient vector field), the problem of testing asymptotic stability is known to be strongly NP-hard \cite{ahmadi2012difficulty}. This result rules out the possibility of a polynomial time or even pseudo-polynomial time algorithm for this task unless P=NP. One difficulty that arises here is that tests of stability based on linearization fail. Indeed, the linearization of $f$ around the origin gives the identically zero vector field. This means (see e.g. \cite[Thm. 4.15]{Khalil:3rd.Ed}) that homogeneous polynomial vector fields of degree $\ge 3$ are never exponentially stable. This fact is independently proven by Hahn in \cite[Sect. 17]{Hahn_stability_book}.

\aaa{Our main contribution in this paper is to show that a proof of asymptotic stability for a homogeneous polynomial vector field can always be found by \emph{semidefinite programming} (Theorem \ref{thm:rat_hom_sdp}). This statement follows from existence of a rational Lyapunov function whose numerator is a \emph{strictly sum of squares} homogeneous polynomial (see Section \ref{sec:sdp_for_rat_lyap} for a definition) and whose denominator is an even power of the 2-norm of the state. Our result generalizes the classical converse Lyapunov theorem for linear systems which corresponds to the case where the power of the strictly sum of squares polynomial in the numerator (resp. denominator) is two (resp. zero).}

\aaa{Our next contribution is a negative result: We show in Proposition \ref{prop:no.finite.bound} that unlike the case of linear systems, for homogeneous polynomial vector fields of higher degree, one cannot bound the degree of the numerator of a rational Lyapunov function as a function of only the degree (or even the degree and the dimension) of the input vector field.}
We leave open the possibility that the degree of \bachir{this numerator} can be bounded as a \emph{computable} function of \aaa{the coefficients of the input vector field}. Such a statement (if true), together with the fact that semidefinite feasibility problems can be solved in finite time \cite{ComplexitySDP}, would imply that the question of testing asymptotic stability for homogeneous polynomial vector fields is decidable. Decidability of asymptotic stability for  polynomial vector fields is an outstanding open question of Arnlod; see \cite{Arnold_Problems_for_Math}, \cite{dacosta_doria_arnold}, \cite{Arnold_algebraic_unsolve}.


In Section \ref{sec:advantages_ratioanal_lyap}, we show a curious advantage \aaa{that rational Lyapunov functions can sometimes have over polynomial ones.} \aaa{In Proposition \ref{prop:unboundedness.poly.lyap},} we give a family of homogeneous polynomial vector fields of degree 5 that all admit a low-degree rational Lyapunov function but require polynomial Lyapunov functions of arbitrarily high degree. We end the paper with some concluding remarks and future research directions in Section \ref{sec:conclusion}.

\section{Approximation of homogeneous functions by rational functions}
\label{sec:orgheadline4}

\newcommand\Hnorm{\bachir{\mathcal H}}
For a positive even integer \(k\), let \(\mathcal H_k(\mathbb R^n)\) denote the set of continuously differentiable homogeneous functions \(V: \mathbb R^n \rightarrow \mathbb R\) of degree \(k\). For a function  \(V \in \mathcal H_k(\mathbb R^n)\), we define the norm \(\|.\|_{\Hnorm}\) as
$$\|V\|_{\Hnorm} =  \max\left\{ \max_{x \in \nsphere} |V(x)|, \max_{x \in \nsphere} \|\nabla V(x)\|_2\right\}.$$

\bachir{We prove in this section that homogeneous rational functions are dense in $\mathcal H_k(\mathbb R^n)$ for the norm \(\|.\|_{\Hnorm}\)}. \aaa{We remark that there is an elegant construction by Peet~\cite{Peet.exp.stability} that approximates derivatives of any function that has continuous mixed derivatives of order $n$ by derivatives of a polynomial. In contrast to that result, the construction below requires the function to only be continuously differentiable and gives a \emph{homogeneous} approximating function of degree $k$. This property is important for our purposes.}

 



\aaa{
\begin{lemma}
\bachir{Let $k$ be a positive even integer. For any function \(V \in \mathcal H_k(\mathbb R^n)\)} and any scalar $\varepsilon>0$, there exist an even integer \(r\) and a homogeneous polynomial \(p\) of degree \(r+k\) such that $$\left|\left|V(x) - \frac{p(x)}{\|x\|^{r}}\right|\right|_{\Hnorm} \le \varepsilon.$$

\label{lem:approx_hom_by_rational}
\end{lemma}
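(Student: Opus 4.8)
The plan is to work on the unit sphere $\nsphere$ and use the Stone–Weierstrass theorem to approximate $V$ and $\nabla V$ simultaneously by polynomials, then homogenize. First I would restrict attention to the compact set $\nsphere$. Since $V \in \mathcal H_k(\mathbb R^n)$ is continuously differentiable, $\nabla V$ is a continuous (vector-valued) function, homogeneous of degree $k-1$; in particular $V$, $\partial V/\partial x_1, \dots, \partial V / \partial x_n$ are all continuous on $\nsphere$. The key observation is that a polynomial approximation of $V$ alone is not enough — I need control on the gradient too — so I would instead invoke a classical $C^1$-density result: continuously differentiable functions on a neighborhood of $\nsphere$ can be approximated in the $C^1$ norm by polynomials (this follows, e.g., by mollifying $V$ to get a smooth function close to $V$ in $C^1$ norm, then truncating its Taylor/Bernstein series; alternatively one can cite the standard fact that polynomials are $C^1$-dense). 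This produces a polynomial $q$ (of some degree, not necessarily homogeneous) with $\max_{x\in\nsphere}|V(x)-q(x)|$ and $\max_{x\in\nsphere}\|\nabla V(x)-\nabla q(x)\|_2$ both small.

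Next I would homogenize. Write $q = \sum_{j} q_j$ where $q_j$ is the degree-$j$ homogeneous component. On $\nsphere$, $\|x\|=1$, so for any even integer $r$ the rational function $\frac{p(x)}{\|x\|^r}$ with $p$ homogeneous of degree $r+k$ agrees with the homogeneous-degree-$k$ function $p(x)/\|x\|_2^r$ evaluated pointwise. The natural choice is to take $p(x) = \sum_j q_j(x)\,\|x\|_2^{\,r+k-j}$ — but this is only a polynomial if $r+k-j$ is even for every $j$ appearing, which fails in general. The fix: replace each monomial-degree piece by multiplying by the appropriate power of $\|x\|_2^2$, i.e. set $p(x) = \sum_{j\le k} q_j(x)\|x\|_2^{2\lceil (k-j)/2\rceil}\cdot(\dots)$ — more cleanly, first replace $q$ by $\tilde q(x) = q(x/\|x\|)\cdot\|x\|^k$ conceptually and note that on $\nsphere$ this equals $q$; then observe $\tilde q$ is a homogeneous degree-$k$ function (generally not rational), and what we actually want is a homogeneous \emph{rational} function close to $V$ in $\|\cdot\|_{\Hnorm}$. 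So I would instead argue directly: take $p(x) = \|x\|_2^{r}\, q(x)$ won't be homogeneous either. The correct route is to fix a large even $r$ and define
$$
p(x) = \sum_{j=0}^{\deg q} q_j(x)\,\|x\|_2^{\,r+k-j},
$$
which is a genuine homogeneous polynomial of degree $r+k$ \emph{provided} $r+k-j$ is a nonnegative even integer for all $j$; this can be arranged only for the $j$ of one parity. To handle both parities, split $q = q^{\mathrm{even}} + q^{\mathrm{odd}}$ and note that on $\nsphere$ an odd-degree homogeneous piece $q_j$ can be multiplied by $\|x\|_2$ (an odd power) to shift parity, at the cost of introducing $\|x\|_2$, which is not polynomial — so the clean statement is: for $x\in\nsphere$, $\frac{p(x)}{\|x\|_2^r} = \sum_j q_j(x) = q(x)$, and we only need $p$ to be a polynomial. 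Choosing $r$ even and large enough that $r+k-j\ge 0$ for all $j\le \deg q$, and absorbing parity by the substitution $\|x\|_2^{r+k-j} = (\|x\|_2^2)^{(r+k-j)/2}$ when $r+k-j$ is even and $= \|x\|_2\cdot(\|x\|_2^2)^{(r+k-j-1)/2}$ otherwise, I conclude that it suffices to also require $q$ to contain only homogeneous components of a single parity class — which I can force at the approximation stage by approximating $V$ instead by $\frac{1}{2}(q(x)+(-1)^k q(-x))$ if needed, exploiting that $V(x) = (-1)^k V(-x)$ since $k$ is even, hence $V$ is even, hence I may take $q$ even, hence every $q_j$ has even degree $j$, hence $r+k-j$ is even. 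That removes the parity obstruction cleanly.

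Finally I would check the two estimates in $\|\cdot\|_{\Hnorm}$. For the value: on $\nsphere$, $p(x)/\|x\|_2^r = q(x)$, so $|V(x) - p(x)/\|x\|_2^r| = |V(x)-q(x)|$ is small by construction. For the gradient: since $r/2$-th power of $\|x\|_2^2$ has gradient vanishing nowhere problematic and equals the constant $1$ with gradient $r x$ on the sphere, a direct computation gives $\nabla\!\left(\frac{p(x)}{\|x\|_2^r}\right) = \frac{\nabla p(x)}{\|x\|_2^r} - r\frac{p(x)\,x}{\|x\|_2^{r+2}}$, which on $\nsphere$ equals $\nabla p(x) - r\,q(x)\,x$; using Euler's identity for the homogeneous function $p$ (degree $r+k$) one shows $\nabla p(x)$ restricted to $\nsphere$, minus the radial correction, reproduces exactly the gradient of the degree-$k$ homogeneous function agreeing with $q$ on the sphere, i.e. it is within a small error of $\nabla V(x)$ because $\nabla q$ is close to $\nabla V$ on $\nsphere$ and the radial terms match up. I expect the main obstacle to be precisely this bookkeeping: ensuring the homogenization produces a bona fide \emph{polynomial} numerator (the parity issue, resolved via evenness of $V$) and verifying that the gradient of the homogenized rational function on the sphere is genuinely close to $\nabla V$, not merely its tangential part. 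Once the radial/tangential decomposition is handled via Euler's formula, the rest is routine, and taking $\varepsilon$ small throughout yields the claim.
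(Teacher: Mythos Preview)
Your proposal is correct and follows essentially the same strategy as the paper's proof: the paper uses Bernstein polynomials explicitly for the $C^1$ approximation on the cube (where you invoke abstract $C^1$-density), symmetrizes via $C(x)=\tfrac12(B_m(x)+B_m(-x))$ exploiting that $V$ is even exactly as you do, homogenizes via $\tilde C(x)=\|x\|^k C(x/\|x\|)$ (equivalent to your component-wise sum $\sum_j q_j(x)\|x\|^{r+k-j}$ once parity is fixed), and handles the gradient on $\nsphere$ via the same radial/tangential split with Euler's identity that you outline. The only substantive difference is presentational---their homogenization formula and the computation $\pi_x(\nabla\tilde C)=\pi_x(\nabla C)$ via the Jacobian $J(x/\|x\|)=I-xx^T$ make the gradient bookkeeping cleaner than tracking $\nabla p$ term-by-term, but the content is the same.
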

}

\begin{proof}
Fix \(V \in \mathcal H_k(\mathbb R^n)\) and $\varepsilon > 0$. For every integer \(m\), define the Bernstein polynomial of order $m$ as
\scalefont{.95}
\begin{align*}
B_m(x) =\sum_{0 \le j_1,  \ldots,  j_n \le m} &V\left(\frac{2j_1}{m}-1, \ldots, \frac{2j_n}{m}-1\right)\\
\cdot &\prod_{s=1}^n {m \choose j_s}  \left(\frac{1+x_s}2\right)^{j_s}\left(\frac{1-x_s}2\right)^{m-{j_s}}.
\end{align*}
\normalsize

The polynomial \(B_m\) has degree \(nm\), and has the property that for \(m\) large enough, it satisfies 

\begin{equation}
\label{eqn:conv_bernstein}
\begin{aligned}
\sup_{\|x\| \le 1} |B_m(x) - V(x)| \le \frac{\varepsilon}{1+k},\\
\text{ and } \sup_{\|x\| \le 1} \|\nabla B_m(x) - \nabla V(x)\| \le \frac{\varepsilon}{1+k}.
\end{aligned}
\end{equation}
See \cite[Theorem 4]{Bernstein2016} for a proof. Let $m$ be fixed now and large enough for the above inequalities to hold. Since \(V(x)\) is an even function, the function $$C(x) := \frac{B_m(x)+B_m(-x)}{2}$$ also satisfies (\ref{eqn:conv_bernstein}).
Because \(C(x)\) is even, the function $$\tilde C(x) := \|x\|^kC\left(\frac{x}{\|x\|}\right)$$ is of the form \(\frac{p(x)}{\|x\|^r}\), where \(p(x)\) is a homogeneous polynomial and \(r\) is an even integer. Also, by homogeneity, the degree of \(p(x)\) is \(r+k\).

It is clear that \(C\) and \(\tilde C\) are equal on the sphere, so
$$\sup_{\|x\| = 1} |\tilde C(x) - V(x)| \le  \frac{\varepsilon}{1+k}.$$

We argue now that the gradient of \(\tilde C\) is close to the gradient of \(V\) on the sphere. For that, fix \(x \in \nsphere\). By Euler's identity for homogeneous functions
$$\langle \nabla\tilde C(x), x\rangle - \langle \nabla V(x), x\rangle = k (\tilde C(x) - V(x)).$$
Since $$|\tilde C (x) - V(x)| \le \varepsilon,$$ it is enough to control the part of the gradient that is orthogonal to \(x\). More precisely, let $$\pi_x(y) \coloneqq y - \langle x, y\rangle x$$ be the projection of a vector $y \in \mathbb R^n$ onto the hyperplane $T_x$ tangent to $\nsphere$ at the point $x$. The following shows that \(\nabla \tilde C\) and \(\nabla C\) are equal when projected on $T_x$:

\begin{align*}
\pi_x(\nabla \tilde C(x))
&= \pi_x\left(k \|x\|^{k-2} C\left(\frac{x}{\|x\|}\right) x\right)
\\&+\pi_x\left( \|x\|^k J\left(\frac{x}{\|x\|}\right)^T \nabla C\left(\frac{x}{\|x\|}\right)\right)
\\&= \pi_x(k  C(x) x + (I-xx^T) \nabla C(x))
\\&= \pi_x(\nabla C(x)).
\end{align*}
Here, the second equation comes from the fact that $\|x\|=1$ and that the Jacobian of $\frac{x}{\|x\|}$ is equal to $I-xx^T$ on $\nsphere$, and the third equation relies on the fact that the projection of vector proportional to $x$ onto $T_x$ is zero. Therefore,
\begin{align*}  \|\pi_x(\nabla \tilde C(x) - \nabla V(x))\|
&= \|\pi_x(\nabla  C(x) - \nabla V(x))\|
\\&\le\|\nabla C(x) - \nabla V(x)\|
\\&\le  \frac{\varepsilon}{1+k}.
\end{align*}
We conclude by noting that 

\begin{align*}
\|\nabla \tilde C(x) - V(x)\| &\le \|\pi_x(\nabla \tilde C(x) - \nabla V(x))\|
\\&+ |\langle x, \nabla \tilde C(x) - \nabla V(x) \rangle|
\\&\le \varepsilon.\end{align*}
\end{proof}

\section{Rational Lyapunov functions}
\label{sec:existence_rat_lyap}

\subsection{Nonexistence of rational Lyapunov functions}
\label{sec:non-existence-rational-Lyapunov-function}

It is natural to wonder whether globally asymptotically stable polynomial vector fields always admit a rational Lyapunov function. We show here that this is not the case, hence also justifying the need for the homogeneity assumption in the statement of our main result (Theorem~\ref{thm:rat_hom_sdp}).

It has been shown in \cite{AAA_MK_PP_CDC11_no_Poly_Lyap} that the polynomial vector field

\begin{equation}
\label{eq:dynamical-system-no-rational-Lyapunov}
\begin{array}{lll}
\dot{x}&=&-x+xy \\
\dot{y}&=&\ \ -y
\end{array}
\end{equation}
is globally asymptotically stable (as shown by the Lyapunov function $V(x, y) = \log(1+x^2) + y^2$) but does not admit a polynomial Lyapunov function. We prove here that this vector field does not admit a rational Lyapunov function either. \aaa{Intuitively, we show that solutions of (\ref{eq:dynamical-system-no-rational-Lyapunov}) cannot be contained within sublevel sets of rational functions because they can grow exponentially before converging to the origin.}





\bachir{More formally,} suppose for the sake of contradiction that the system had a Lyapunov function of the form
$$V(x, y) = \frac{p(x, y)}{q(x, y)},$$
 where \(p(x, y)\) and \(q(x, y)\) are polynomials. Note first that the solution to system (\ref{eq:dynamical-system-no-rational-Lyapunov}) from any initial condition \((x_0, y_0) \in \mathbb R^2\) can be written explicitly:

\begin{equation*}
\begin{array}{lll}
x(t)&=&x_0e^{-t}e^{y_0(1-e^{-t})} \\
y(t)&=& y_0e^{-t}.
\end{array}
\end{equation*}

In particular, a solution that starts from \((x_0, y_0) = (k, \alpha k)\) for \(\alpha, k > 1\) reaches the point \((e^{\alpha(k-1)}, \alpha)\) after time $$t^*=\log(k).$$
 As \(t^* > 0\), the function $V$ must satisfy $$V(x(t^*), y(t^*)) < V(x_0, y_0),$$
 $$\text{i.e., } \quad\quad \frac{p(e^{\alpha(k-1)}, \alpha)}{q(e^{\alpha(k-1)}, \alpha)} < \frac{p(k, \alpha k)}{q(k, \alpha k)}.$$

Fix \(\alpha > 1\) and note that since  \(V(x, \alpha) \rightarrow \infty\) as \(x \rightarrow \infty\), then necessarily the degree of \(x \rightarrow p(x, \alpha)\) is larger than the degree of \(x \rightarrow q(x, \alpha)\). We can see from this that the left-hand side of the above inequality grows exponentially in \(k\) while the right-hand side grows polynomially, which cannot happen.

\subsection{Rational Lyapunov functions for homogeneous dynamical systems}
\label{sec:orgheadline6}
\label{sec:construction-rational-Lyapunov}


We now show that existence of rational Lyapunov functions is necessary for stability of homogeneous vector fields.

\begin{theorem}
Let \(f\) be a homogeneous, continuously differentiable function of degree \(d\). Then the system \(\dot x = f(x)\) is asymptotically stable if and only if  it admits a Lyapunov function of the type
\begin{equation}
\label{eqn:rationa_lyap}
V(x) = \frac{p(x)}{(\sum_{i=1}^n x_i^2)^r},
\end{equation}
where  \(r\) is a nonnegative integer and \(p\) is a homogeneous (positive definite) polynomial of degree $2r+2$.
\label{thm:rat_lyap_for_hom_gas}

\end{theorem}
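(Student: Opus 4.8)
The plan is to prove both directions of the equivalence. The "if" direction is essentially immediate: if such a $V$ exists, it is a Lyapunov function by assumption (positive definite, radially unbounded since $p$ is a positive definite homogeneous polynomial and $V$ is itself homogeneous of positive degree $2$, with negative definite derivative), so asymptotic stability follows from the standard Lyapunov theorem recalled in the introduction. So the work is entirely in the "only if" direction.

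**For the "only if" direction,** I would proceed in three steps. First, invoke the classical converse theorem for homogeneous systems (cited in the excerpt as \cite{HomogHomog}, \cite{Hahn_stability_book}, \cite{Zubov1964methods}, \cite{Kawski1988stability}): an asymptotically stable homogeneous continuously differentiable vector field admits a homogeneous, continuously differentiable Lyapunov function $W$; moreover one can take its degree to be any prescribed large even integer $k$, in particular $k \geq 2$ and as large as we need. Second, I would apply Lemma~\ref{lem:approx_hom_by_rational} to $W \in \mathcal H_k(\mathbb R^n)$ to obtain, for a small $\varepsilon$ to be chosen, an even integer $r_0$ and a homogeneous polynomial $\tilde p$ of degree $r_0 + k$ with $\|W - \tilde p/\|x\|^{r_0}\|_{\Hnorm} \le \varepsilon$. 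Write $\tilde V = \tilde p/\|x\|^{r_0}$. Third, I would argue that for $\varepsilon$ small enough (relative to the minima of $W$ and $-\langle \nabla W, f\rangle$ over the unit sphere, which are positive by compactness), $\tilde V$ is still a Lyapunov function: on $\nsphere$, $\tilde V(x) \ge \min_{\nsphere} W - \varepsilon > 0$, and $-\langle \nabla \tilde V(x), f(x)\rangle \ge -\langle \nabla W(x), f(x)\rangle - \|\nabla \tilde V(x) - \nabla W(x)\|\,\|f(x)\| \ge \min_{\nsphere}(-\langle \nabla W, f\rangle) - \varepsilon M > 0$, where $M = \max_{\nsphere}\|f\|$. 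By homogeneity both inequalities propagate off the sphere, so $\tilde V$ satisfies (\ref{eqn:Lyapunov_ineq_pd}) and (\ref{eqn:Lyapunov_ineq_grad_pd}), and it is radially unbounded as a positive definite homogeneous function.

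**The remaining issue is the precise degree bookkeeping** to match the exact form $V = p/(\sum x_i^2)^r$ with $p$ of degree $2r+2$. The function $\tilde V$ constructed above is homogeneous of degree $k$, which need not equal $2$. To fix this, I would simply take $V := \tilde V^{1/(k/2)}$... no — a cleaner route: note $\tilde V$ is positive definite homogeneous of degree $k$, so $V := \tilde V^{2/k}$ is positive definite homogeneous of degree $2$ but may no longer be rational. Instead I would argue as follows: apply the converse theorem to get a homogeneous Lyapunov function and then, before approximating, raise it to a suitable power so that its degree is exactly $2$ after division — concretely, run the whole argument requiring $k = 2$ in the converse theorem only if available; otherwise, given $\tilde V = \tilde p/\|x\|^{r_0}$ homogeneous of degree $k$ (even), the function $\tilde V^{2/k}$ is still a Lyapunov function (powers of Lyapunov functions are Lyapunov functions, since $\frac{d}{dt}\tilde V^{2/k} = \frac{2}{k}\tilde V^{2/k - 1}\frac{d}{dt}\tilde V < 0$ away from the origin), but to keep rationality I would instead directly set $V(x) = \tilde p(x)\,\|x\|^{2r+2 - (r_0+k)}$-type normalization. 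The honest fix, which I expect is what the authors do, is: multiply numerator and denominator of $\tilde V$ by an appropriate power of $\|x\|^2$ so the denominator becomes $(\sum x_i^2)^r$ for a single integer $r$, and observe that $2r+2$ must be forced to equal $\deg p$ after also composing with the power $\tilde V \mapsto \tilde V^{2/k}$ where one uses that $k$ was chosen even; the main obstacle is verifying that this power operation preserves rationality, which works precisely when $k$ divides $2\cdot(\text{something})$ — so I would choose $k = 2$ from the converse theorem at the outset, making $\deg p = r_0 + 2 = 2r+2$ with $r = r_0/2$, and everything closes. I expect the degree normalization, not the analysis, to be the only delicate point.
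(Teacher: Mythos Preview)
Your proposal is correct and matches the paper's proof: the paper invokes Rosier's theorem directly with $k=2$ (i.e., it takes $W\in\mathcal H_2(\mathbb R^n)$ from the outset), applies Lemma~\ref{lem:approx_hom_by_rational}, and closes exactly as in your Steps~1--3 via compactness and Cauchy--Schwarz. Your ``remaining issue'' therefore disappears immediately---with $k=2$ the lemma yields a numerator of degree $r_0+2$ over $\|x\|^{r_0}$ with $r_0$ even, so $r=r_0/2$ and $\deg p=2r+2$ automatically; the detours through fractional powers are unnecessary.
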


\begin{proof}
The ``if direction" of the theorem is a standard application of Lyapunov's theorem; see e.g. \cite[Thm. 4.2]{Khalil:3rd.Ed}.

For the ``only if" \bachir{direction}, suppose \(f\) is continuously differentiable homogeneous function of degree \(d\), and that the system \(\dot x = f(x)\) is asymptotically stable. A result of Rosier \cite[Thm. 2]{HomogHomog} (see also \cite[Thm. 57.4]{Hahn_stability_book} \cite[Thm. 36]{Zubov1964methods} \cite[Prop. p.1246]{Kawski1988stability}) implies that there exists a function \(W \in \mathcal H_2(\mathbb R^n)\) such that
\begin{align*}
W(x) > 0 &\quad \forall x \in \nsphere,\\
-\langle \nabla W(x), f(x) \rangle > 0 & \quad \forall x \in \nsphere.
\end{align*}

Since these inequalities are strict and involve continuous functions, we may assume that there exists a \(\delta > 0\) such that
$$W(x) \ge \delta \text{ and }  -\langle \nabla W(x), f(x) \rangle \ge \delta \; \forall x \in \nsphere.$$
Let $$f_\infty \coloneqq \max\{1, \max_{\|x\| = 1} \|f(x)\|\}.$$
Lemma \ref{lem:approx_hom_by_rational} proves the existence of a function \(V(x)\) of the form (\ref{eqn:rationa_lyap}) that satisfies
\begin{align*}
|V(x)-W(x)| &\le \frac{\delta}{2f_{\infty}} & \quad \forall x \in \nsphere,\\
\|\nabla V(x)-\nabla W(x)\| &\le \frac{\delta}{2f_\infty} & \quad \forall x \in \nsphere.
\end{align*}
Fix \(x \in \nsphere\). An application of the Cauchy-Schwarz inequality gives
\begin{align*}
&|\langle \nabla W(x), f(x) \rangle - \langle \nabla V(x), f(x) \rangle| \\&\le \|\nabla W(x)- \nabla V(x)\| \|f(x)\| \\&\le\frac{\delta}2.
\end{align*}
Therefore,
$$V(x) \ge \frac\delta2 \text{ and }  -\langle \nabla V(x), f(x) \rangle \ge \frac\delta2 \; \forall x \in \nsphere.$$
\end{proof}

\section{An SDP hierarchy for searching for rational Lyapunov functions}
\label{sec:sdp_for_rat_lyap}

For a rational function of the type in  \bachir{(\ref{eqn:rationa_lyap})} to be a Lyapunov function, we need the polynomial $V$ and 

\begin{align*}
- \dot V(x) &\coloneqq - \langle \nabla V(x), f(x) \rangle 
\\&= \frac{-\|x\|^2 \langle\nabla p(x),f(x)\rangle +2r p(x)\langle x,f(x)\rangle}{\|x\|^{2(r+1)}},
\end{align*}
to be positive definite. This condition is equivalent to the polynomials in the numerators of $V$ and $-\dot V$ being positive definite. 
\aaa{In this section, we prove a stronger result which shows that there always exists a rational Lyapunov function whose two positivity requirements have ``sum of squares certificates''. This is valuable because the search over this more restricted class of positive polynomials can be carried out via semidefinite programming while the search over all positive polynomials is NP-hard~\cite{PhD:Parrilo}.}


Recall that a homogeneous polynomial $h$ of degree $2d$ is a \emph{sum of squares} (sos) if it can be written as $h = \sum_{i} g_i^2$ for some (homogeneous) polynomials $g_i$. This is equivalent to existence of a symmetric positive semidefinite matrix $Q$ that satisfies

\begin{equation}
\label{eqn:sos_gram}
h(x) = m(x)^TQm(x) \quad \forall x,
\end{equation}
where $m(x)$ is the vector of all monomials of degree $d$. We say that $h$ is \emph{strictly sos} if it is in the interior of the cone of sos homogeneous polynomials of degree $2d$. This is equivalent to existence of a positive definite matrix $Q$ that satisfies (\ref{eqn:sos_gram}). Note that a strictly sos homogeneous polynomial is positive definite. We will need the following Positivstellensatz due to Scheiderer.

\aaa{
\begin{lemma}[Scheiderer \cite{Claus_Hilbert17}, \cite{PowersPositiveDefiniteForms}]
\label{thm:claus}
For any two positive definite homogeneous polynomials $h$ and $g$, there exists an integer $q_0 \ge 0$ such that the polynomial $hg^q$ is strictly sos for all integers $q \ge q_0$.
\end{lemma}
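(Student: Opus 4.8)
This statement is a theorem of Scheiderer, so my aim would be to reconstruct the idea behind its proof rather than to find a genuinely new argument. Let me first record why the most naive approach does not close. Since $h$ and $g$ are positive definite forms, so is their product, and more generally so is $hg^q$ for every fixed $q$; Reznick's uniform–denominator theorem then supplies, for each such $q$, an integer $N=N(q)$ with $\|x\|^{2N}hg^q$ a sum of squares. This does not settle the lemma, because the exponent $N(q)$ that Reznick's argument produces is controlled by the degree of the form and by the ratio $\max_{\nsphere}(\cdot)/\min_{\nsphere}(\cdot)$, both of which grow with $q$, so there is no way along these lines to force $N(q)=0$. The content of the lemma is precisely that one is allowed to use powers of $g$ itself — rather than powers of $\|x\|^{2}$ — as the ``denominator,'' and these two kinds of denominators behave very differently.

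The plan is then: (i) reformulate the core statement (``$hg^q$ is sos for some $q$'') as the assertion that $h$ is a sum of squares in the localized ring $\R[x_1,\dots,x_n]\!\left[1/g\right]$ — indeed, from $h=\sum_i(p_i/g^{m_i})^2$ one clears denominators to get $hg^{2M}=\sum_i(g^{M-m_i}p_i)^2$, and conversely one such identity yields $hg^{q}$ sos for all larger even $q$ upon multiplying by squares $g^{2}$, with the odd exponents covered by running the same reasoning on the positive definite form $hg$. (ii) Read $\R[x]\!\left[1/g\right]$ as the coordinate ring of $U\coloneqq\{g\neq 0\}$, and exploit that, because $g$ is positive definite of positive degree, the real sublevel set $\{g\le 1\}$ is a compact neighbourhood of the origin whose boundary $\{g=1\}$ is a smooth compact hypersurface, while $h$ is strictly positive on $U(\R)=\R^n\setminus\{0\}$ and, being a form, has controlled vanishing behaviour near the deleted origin. (iii) Apply Scheiderer's Positivstellensatz for compact real varieties to the strictly positive function $h$ on $\{g=1\}$: this is the classical low–dimensional case when $n\le 3$ and needs Scheiderer's higher–dimensional results in general, positive definiteness of $h$ being what supplies the required behaviour along the complex zero locus of $g$. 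The resulting affine sos identity on $\{g=1\}$ is then converted into a homogeneous identity $hg^{N}=\sum_i p_i^{2}$ with the $p_i$ forms, by the standard bookkeeping relating the affine and homogeneous (``projective'') Positivstellensätze. (iv) Finally, to upgrade ``sos'' to ``strictly sos'' and a single exponent to all $q\ge q_0$, run the above with $h$ replaced by $h-\varepsilon\|x\|^{\deg h}$, which remains positive definite for small $\varepsilon>0$, and push the $\varepsilon\|x\|^{\deg h}$ term through using Reznick's theorem (the $g=\|x\|^{2}$ case) together with the fact that $\|x\|^{2}\cdot(\mathrm{sos})$ and $g^{2}\cdot(\mathrm{sos})$ are again sos.

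The step I expect to be the genuine obstacle is (iii) with an arbitrary positive definite multiplier $g$. P\'olya's theorem, and hence Reznick's construction, is rigidly tied to the multiplier $\|x\|^{2}$ (equivalently, to linear forms on a simplex), and there is no elementary substitution reducing a general $g$ to that situation; one must control sums of squares along the complex zero set of $g$ — where $h$ need be neither positive nor even nonzero — and this is exactly where Scheiderer's real–algebraic machinery (the local–global principle for sums of squares, stability of preorderings, and the inductive reduction of dimension) becomes indispensable. The remaining steps (i), (ii), (iv) should be routine.
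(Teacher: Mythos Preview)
The paper does not prove this lemma: it is quoted as an external result due to Scheiderer (with the two citations given) and is then used as a black box in the proof of Theorem~\ref{thm:sos_lyap_for_rat_lyap}. There is therefore no ``paper's own proof'' to compare your proposal against.

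As for the outline itself, the scaffold is the right one and you correctly flag step~(iii) as the substantive part. Two remarks are worth making. First, the conversion you place at the end of~(iii) --- from an affine sos identity modulo $(g-1)$ to a homogeneous identity $hg^{N}=\sum_i p_i^{2}$ with the $p_i$ forms --- is not ``standard bookkeeping.'' A Schm\"udgen-type representation $h=\sum_i q_i^2+(g-1)r$ on the compact set $\{g=1\}$ gives no control on the degrees or homogeneity of the $q_i$, and naive homogenization of $\sum_i q_i^2$ does not produce a sum of squares of forms. Passing to the projective picture (sections of powers of the ample line bundle determined by $g$) is precisely where Scheiderer's framework enters, so this step is really part of~(iii) rather than a postscript to it. Second, in~(iv) your reduction to \emph{strictly} sos needs $\varepsilon\,\|x\|^{\deg h}g^{q}$ itself to lie in the interior of the sos cone, which does not follow from ``sos times sos'' alone; it is true, but wants its own short argument (e.g.\ via an explicit positive definite Gram matrix). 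In sum, your plan is morally Scheiderer's, and your admission that step~(iii) ``is exactly where Scheiderer's real-algebraic machinery becomes indispensable'' is the accurate assessment --- the proposal locates the difficulty correctly but does not reduce it.
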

}

\begin{theorem}
  If a homogeneous polynomial dynamical system admits a rational Lyapunov function of the form
  $$V(x)= \frac{p(x)}{(\sum_i x_i^2)^{r}},$$
where \(p(x)\) is a homogeneous polynomial, then it also admits a rational Lyapunov function 
$$W(x) = \frac{\hat p(x)}{(\sum_i x_i^2)^{\hat r}},$$
where the numerators of \(W\) and  $-\dot W$ are both strictly sos homogeneous polynomials.
\label{thm:sos_lyap_for_rat_lyap}

\end{theorem}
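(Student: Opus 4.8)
The plan is to start from the given rational Lyapunov function $V(x) = p(x)/\|x\|^{2r}$ and observe that, by the Lyapunov conditions, both numerator polynomials are positive definite: $p(x)$ itself is positive definite (of degree $2r+2$), and the numerator of $-\dot V$, namely
$$N(x) := -\|x\|^2 \langle \nabla p(x), f(x)\rangle + 2r\, p(x)\langle x, f(x)\rangle,$$
is positive definite as well (this is exactly the content of the displayed formula for $-\dot V$ in Section~\ref{sec:sdp_for_rat_lyap}). The idea is then to multiply $V$ by a high even power of $\|x\|^2$ in both numerator and denominator --- which does not change the function --- and invoke Scheiderer's Positivstellensatz (Lemma~\ref{thm:claus}) to make both numerators strictly sos. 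Concretely, set $\hat p(x) = p(x)\,\|x\|^{2q}$ and $\hat r = r + q$, so that $W(x) = \hat p(x)/\|x\|^{2\hat r} = V(x)$ for all $x \neq 0$, and $W$ is automatically still a Lyapunov function.

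\textbf{The key steps, in order.} First I would check that the numerator of $-\dot W$ scales correctly: since $W = V$, we have $-\dot W = -\dot V = N(x)/\|x\|^{2(r+1)}$, and writing this with denominator $\|x\|^{2(\hat r + 1)}$ gives numerator $N(x)\,\|x\|^{2q}$. So the two numerators we must certify as strictly sos are $p(x)\,\|x\|^{2q}$ and $N(x)\,\|x\|^{2q}$. Second, apply Lemma~\ref{thm:claus} with $g(x) = \|x\|^2 = \sum_i x_i^2$ (which is positive definite and homogeneous): there is an integer $q_1$ such that $p(x)\,(\|x\|^2)^q$ is strictly sos for all $q \ge q_1$, and an integer $q_2$ such that $N(x)\,(\|x\|^2)^q$ is strictly sos for all $q \ge q_2$. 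Third, take any $q \ge \max\{q_1, q_2\}$ and set $\hat r = r + q$, $\hat p(x) = p(x)\,(\|x\|^2)^q$. Then $W(x) = \hat p(x)/(\sum_i x_i^2)^{\hat r}$ has the claimed form, equals $V$ on $\mathbb{R}^n \setminus \{0\}$ hence is a Lyapunov function, and both its numerator and the numerator of $-\dot W$ are strictly sos. One should also note $\hat p$ has degree $2r+2+2q = 2\hat r + 2$, matching the degree bookkeeping of Theorem~\ref{thm:rat_lyap_for_hom_gas}.

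\textbf{Minor points to verify.} I would double-check that $p$ being positive definite is genuinely implied by $V$ being a Lyapunov function: since $V(x) > 0$ for $x \neq 0$ and $\|x\|^{2r} > 0$ for $x \neq 0$, indeed $p(x) = V(x)\|x\|^{2r} > 0$ for $x \neq 0$. Similarly $-\dot V(x) > 0$ for $x \neq 0$ forces $N(x) > 0$ for $x \neq 0$. Both $p$ and $N$ are homogeneous polynomials of even degree (degrees $2r+2$ and $2r+d+1$ respectively; note $d$ must be odd for asymptotic stability as recalled in the introduction, so $2r+d+1$ is even), so "positive definite" is the right hypothesis for Lemma~\ref{thm:claus}.

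\textbf{The main obstacle} is essentially conceptual rather than technical: recognizing that multiplying numerator and denominator by $\|x\|^{2q}$ is ``free'' (it leaves the rational function, and hence all Lyapunov conditions, unchanged) while simultaneously being exactly the operation that Scheiderer's theorem turns into a strict-sos certificate. Once that observation is in place, the proof is a short application of Lemma~\ref{thm:claus} twice and some degree bookkeeping. There is no delicate analytic estimate and no need to re-derive anything about the dynamics; the only care needed is to handle the numerator of $-\dot W$ correctly under the rescaling, which the computation above makes routine.
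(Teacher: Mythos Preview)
Your proof is correct and takes a genuinely different route from the paper's. The paper sets $W = V^{\hat q}$ for a sufficiently large integer $\hat q$ and applies Lemma~\ref{thm:claus} first with $g = h = p$ (making the numerator $p^{\hat q}$ strictly sos) and then with $h = -\langle s, f\rangle$ and $g = p$ (making the numerator $p^{\hat q - 1}\,(-\langle s, f\rangle)$ of $-\dot W$ strictly sos). You instead leave $W = V$ identically as a function, only rewriting its numerator and denominator by multiplying both by $(\|x\|^2)^q$, and you apply Lemma~\ref{thm:claus} twice with the fixed multiplier $g = \|x\|^2$.

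Your approach is arguably cleaner: since $W$ coincides with $V$ on $\mathbb{R}^n\setminus\{0\}$, there is nothing to re-verify about the Lyapunov inequalities, and the numerator of $-\dot W$ is obtained immediately from the identity $N/\|x\|^{2(r+1)} = N\,\|x\|^{2q}/\|x\|^{2(\hat r + 1)}$. The paper's version, by contrast, has to recompute $\nabla V^q$ and recognize the factorization $p^{q-1}\langle -s, f\rangle$ in the numerator of $-\dot{(V^q)}$. On the other hand, the paper's choice $g = p$ ties the Scheiderer multiplier to the specific Lyapunov function rather than to the generic form $\|x\|^2$, which could in principle give smaller exponents (and hence smaller SDPs downstream) in concrete instances. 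One minor arithmetic slip in your write-up: with $\deg p = 2r+2$ the degree of $N$ is $2r+d+3$, not $2r+d+1$; this is still even when $d$ is odd, so it does not affect the argument.
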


\begin{proof}
The condition that \(V\) be positive definite is equivalent to \(p\) being positive definite. The gradient of \(V\) is equal to
\begin{align*}
\nabla V(x) &= \frac{\|x\|^{2r}\nabla p(x) - 2r\|x\|^{2r-2}p(x) x }{\|x\|^{4r}}
\\&=  \frac{\|x\|^2 \nabla p(x) - 2rp(x) x }{\|x\|^{2r+2}}.
\end{align*}

If we let $$s(x) \coloneqq  \|x\|^2 \nabla p(x) - 2rp(x) x,$$
then the condition that  \(-\langle \nabla V(x), f(x) \rangle\) be positive definite is equivalent to \(-\langle s(x), f(x) \rangle\) being positive definite.

We claim that there exists a positive integer $\hat q$, such that 
$$W(x) \coloneqq V^{\hat q}(x)$$
 satisfies the conditions of the theorem. Indeed, by applying \bachir{Lemma \ref{thm:claus}} with $g=h=p$, there exists $q_0$, such that $p^q$ is strictly sos for all integers $q \ge q_0$.

Let us now examine the gradient of a function of the type $V^q$. We have
\begin{align*}
\nabla V^q(x)
= q V^{q-1}(x) \nabla V(x)
= q \left(\frac{p(x)}{\|x\|^{2r}}\right)^{q-1} \frac{s(x)}{\|x\|^{2r+2}}.
\end{align*}
Hence,
  $$-\langle \nabla V^q(x), f(x) \rangle = \frac{q}{\|x\|^{2rq + 2}} p(x)^{q-1} \langle -s(x), f(x)\rangle.$$
Since the homogeneous polynomials $$p(x) \mbox{\ and\ } \langle -s(x), f(x)\rangle$$ are both positive definite, by \bachir{Lemma~\ref{thm:claus}}, there exists an integer $q_1$ such that $$p(x)^{q-1} \langle -s(x), f(x) \rangle$$ is strictly sos for all $q \ge q_1$. Taking $\hat q = \max\{q_0, q_1\}$ finishes the proof as we can let $$\hat p = p^{\hat q}, \hat r = r\hat q.$$
\end{proof}

If we denote the \bachir{degree} of $\hat p$ by $s$, then characterization (\ref{eqn:sos_gram}) of strictly sos homogeneous polynomials applied to the numerator of $W$ and its derivative tells us that there exist positive definite matrices $P$ and $Q$ such that
$$W(x) = \frac{\langle m(x), P m(x) \rangle}{\|x\|^{2\hat r}},$$
and
$$- \dot W(x) = \frac{\langle z(x), Q z(x) \rangle}{\|x\|^{2\hat r+2}},$$
where \(m(x)\) (resp. $z(x)$) here denotes the vector of monomials in \(x\) of degree \(\frac s2\) (resp. $\frac{s+d+1}2$).
Notice that by multiplying  $W$ by a positive scalar, we can assume without loss of generality that $P \succeq I$ and $Q \succeq I$.

Putting Theorem \ref{thm:rat_lyap_for_hom_gas} and Theorem \ref{thm:sos_lyap_for_rat_lyap} together, we get the main result of this paper.

\begin{theorem}
  \label{thm:rat_hom_sdp}
  A homogeneous polynomial dynamical system \(\dot x = f(x)\) of degree $d$ is asymptotically stable if and only if there exist a nonnegative integer \(r\), a positive even integer \(s\), with \(2r < s\), and symmetric  matrices \(P \succeq I\) and \(Q \succeq I\), such that

  \begin{equation}
    \label{eq:rat_hom_sdp}
    \begin{aligned}
      \langle z(x), Q z(x)\rangle &= -2 \|x\|^2  \langle J(m(x))^T P m(x), f(x)\rangle
      \\&+ 2r m(x)^TPm(x) \langle x, f(x) \rangle  \quad \forall x \in \mathbb R^n,
    \end{aligned}
  \end{equation}
  \noindent where \(m(x)\) (resp. $z(x)$) here denotes the vector of monomials in \(x\) of degree \(\frac s2\) (resp. $\frac{s+d+1}2$), and $J(m(x))$ denotes the Jacobian of $m(x)$.
\end{theorem}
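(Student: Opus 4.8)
The plan is to obtain this theorem by combining Theorem~\ref{thm:rat_lyap_for_hom_gas} with Theorem~\ref{thm:sos_lyap_for_rat_lyap} and then translating the two strict-sos conditions into the Gram-matrix language of~(\ref{eqn:sos_gram}); indeed, the discussion immediately preceding the statement already carries out most of the work, so the proof mainly needs to verify that the polynomial identity appearing there is exactly~(\ref{eq:rat_hom_sdp}).

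For the ``only if'' direction, suppose $\dot x = f(x)$ is asymptotically stable with $f$ homogeneous of degree $d$. Theorem~\ref{thm:rat_lyap_for_hom_gas} produces a rational Lyapunov function $V = p/(\sum_i x_i^2)^r$ with $p$ positive definite homogeneous of degree $2r+2$, and Theorem~\ref{thm:sos_lyap_for_rat_lyap} upgrades it to $W = \hat p/(\sum_i x_i^2)^{\hat r}$ with $\hat p = p^{\hat q}$, $\hat r = r\hat q$ (as in its proof), where $\hat p$ and the numerator of $-\dot W$ are strictly sos. Set $s \coloneqq \deg \hat p = (2r+2)\hat q$: this is a positive even integer, and $2\hat r = 2r\hat q < s$. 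Since the numerator of $-\dot W$ is strictly sos, it is an even-degree homogeneous polynomial of degree $s+d+1$, so $(s+d+1)/2$ is an integer and the monomial vectors $m(x)$ of degree $s/2$ and $z(x)$ of degree $(s+d+1)/2$ are well defined. Applying~(\ref{eqn:sos_gram}) to the numerators of $W$ and $-\dot W$ gives positive definite matrices $P,Q$ with $W = \langle m, Pm\rangle/\|x\|^{2\hat r}$ and $-\dot W = \langle z, Qz\rangle/\|x\|^{2\hat r+2}$; rescaling $W$ by a positive constant makes $P \succeq I$ and $Q \succeq I$. Finally I verify~(\ref{eq:rat_hom_sdp}) with $r = \hat r$: the quotient rule gives $\nabla W = \big(\|x\|^2\,\nabla\langle m,Pm\rangle - 2\hat r\langle m,Pm\rangle x\big)/\|x\|^{2\hat r+2}$, and $\nabla\langle m,Pm\rangle = 2\,J(m)^T P m$ by the chain rule and symmetry of $P$; forming $-\langle\nabla W,f\rangle$ and clearing the denominator $\|x\|^{2\hat r+2}$ turns the identity $-\dot W = \langle z,Qz\rangle/\|x\|^{2\hat r+2}$ into exactly~(\ref{eq:rat_hom_sdp}).

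For the ``if'' direction, given $r$, an even integer $s$ with $2r < s$, and matrices $P \succeq I$, $Q \succeq I$ satisfying~(\ref{eq:rat_hom_sdp}), define $W(x) \coloneqq \langle m(x), P m(x)\rangle/(\sum_i x_i^2)^r$. Then $W$ is homogeneous of degree $s-2r \ge 2$; its numerator is positive definite because $P \succeq I \succ 0$, so $W$ is positive definite and, being homogeneous of positive degree, radially unbounded; and $W$ is continuously differentiable on all of $\mathbb R^n$, since a homogeneous function of degree at least $2$ that is $C^1$ away from the origin extends to a $C^1$ function with zero gradient at the origin (its gradient is homogeneous of positive degree, hence tends to $0$ at the origin). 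Reading the gradient computation of the previous paragraph in reverse, identity~(\ref{eq:rat_hom_sdp}) yields $-\dot W(x) = \langle z(x), Q z(x)\rangle/\|x\|^{2r+2}$, which is positive definite since $Q \succeq I \succ 0$. Thus $W$ satisfies~(\ref{eqn:Lyapunov_ineq_pd}) and~(\ref{eqn:Lyapunov_ineq_grad_pd}) and is radially unbounded, so by Lyapunov's theorem~\cite[Thm.~4.2]{Khalil:3rd.Ed} the origin is globally asymptotically stable.

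The analytic substance is entirely carried by Theorems~\ref{thm:rat_lyap_for_hom_gas} and~\ref{thm:sos_lyap_for_rat_lyap}, so I do not anticipate a genuine obstacle. The only points needing care are the bookkeeping that forces $s$ to be even with $2r<s$ and $(s+d+1)/2$ to be an integer (the latter being where it matters that an asymptotically stable homogeneous $f$ has odd degree, equivalently that the sos numerator of $-\dot W$ has even degree), and the exact matching of the quotient-rule gradient computation with the right-hand side of~(\ref{eq:rat_hom_sdp}).
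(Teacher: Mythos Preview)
Your proposal is correct and follows exactly the route the paper takes: the paper's proof is the single sentence ``Putting Theorem~\ref{thm:rat_lyap_for_hom_gas} and Theorem~\ref{thm:sos_lyap_for_rat_lyap} together, we get the main result of this paper,'' with the Gram-matrix translation and the rescaling to $P,Q\succeq I$ done in the paragraph preceding the statement. Your write-up simply fills in the bookkeeping (the quotient-rule identity matching~(\ref{eq:rat_hom_sdp}), the parity of $s$ and of $s+d+1$, and the $C^1$ extension at the origin) that the paper leaves implicit.
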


For fixed integers $s$ and $r$ with $2r < s$, one can test for existence of matrices $P \succeq I$ and $Q \succeq I$ that satisfy (\ref{eq:rat_hom_sdp}) by solving a semidefinite program. This gives rise to a hierarchy of semidefinite programs where one tries increasing values of $s$, and for each $s$, values of $r \in \{0,\ldots,\frac{s}2-1\}$.

\section{A negative result on degree bounds}
\label{sec:non-existence-degree-bound}

The sizes of the matrices $P$ and $Q$ that appear in the semidefinite programming hierarchy we just proposed depend on $s$, but not $r$. This motivates us to study whether one can bound $s$ as a function of the dimension $n$ and the degree $d$ of the vector field at hand. In this section, we show that the answer to this question is negative. In fact, we prove a stronger result which shows that one cannot bound the degree of the numerator of a rational Lyapunov function based on $n$ and $d$ only (even if one ignores the requirement that the Lyapunov function and its derivative have sos certificates of positivity).

To prove this statement, we build on ideas by Bacciotti and Rosier~\cite{Bacciotti.Rosier.Liapunov.Book} to construct a family of 2-dimensional degree-3 homogeneous polynomial vector fields that are asymptotically stable but necessitate rational Lyapunov functions whose numerators have arbitrarily high degree.

\aaa{
\begin{proposition}
Let \(\lambda\) be a positive irrational real number and consider the following homogeneous cubic vector field parameterized by the scalar \(\theta\):

\scalefont{.82}
\begin{equation}
\label{eq:a.s.cubic.vec.arbitrary.high.Lyap}
\begin{pmatrix}\dot{x} \\ \dot{y}\end{pmatrix} =\begin{pmatrix}\cos(\theta) & -\sin(\theta)\\\sin(\theta)&\ \ \  \cos(\theta)  \end{pmatrix} \begin{pmatrix}-2\lambda y(x^2+y^2)-2y(2x^2+y^2) \\
\ \ 4\lambda x(x^2+y^2)+2x(2x^2+y^2)
\end{pmatrix}.
\end{equation}

\normalsize

Then, for $0 < \theta < \pi$, the origin is asymptotically stable. However, for any positive integer \(s\), there exits a scalar \(\theta \in (0, \pi)\)  such that the vector field in (\ref{eq:a.s.cubic.vec.arbitrary.high.Lyap}) does not admit a rational Lyapunov function with a homogeneous polynomial numerator of degree \(\leq s\) and a homogeneous polynomial denominator.
\label{prop:no.finite.bound}
\end{proposition}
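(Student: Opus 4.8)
Write $f_\theta$ for the vector field in (\ref{eq:a.s.cubic.vec.arbitrary.high.Lyap}) and $\tilde f \coloneqq f_0$ for its unrotated part, so that $f_\theta = R_\theta\tilde f$ with $R_\theta = \cos\theta\, I + \sin\theta\, R_{\pi/2}$ the planar rotation by $\theta$. Everything rests on one structural observation: $\tilde f$ admits the first integral $H(x,y)\coloneqq (x^2+y^2)(2x^2+y^2)^\lambda$, which is \emph{not} a rational function when $\lambda\notin\mathbb Q$. For the asymptotic stability claim I would first verify $\langle\nabla H,\tilde f\rangle\equiv 0$ by a direct computation (both $(\partial_x H)\,\tilde f_1$ and $(\partial_y H)\,\tilde f_2$ factor through $(2x^2+y^2)^{\lambda-1}$ and cancel); equivalently, in polar coordinates $\tilde f$ reads $\dot\rho=\lambda\rho^3\sin 2\phi$, $\dot\phi=2(\lambda+1)\rho^2(1+\cos^2\phi)$, and integrating $\tfrac{d}{d\phi}\ln\rho = -\tfrac{\lambda}{2(\lambda+1)}\tfrac{d}{d\phi}\ln(1+\cos^2\phi)$ recovers $\rho^{2(\lambda+1)}(1+\cos^2\phi)^\lambda = H$. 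Since the state space is the plane, $\tilde f = m\cdot(\partial_y H,-\partial_x H)$ for a scalar function $m$; evaluating on the positive $x$-axis gives $m<0$ there, and $m$ is nonzero off the origin (because $\tilde f$ vanishes only at the origin), hence $m<0$ everywhere by continuity. Then for $\theta\in(0,\pi)$,
\[
\dot H = \langle\nabla H, R_\theta\tilde f\rangle = \sin\theta\,\langle\nabla H, R_{\pi/2}\tilde f\rangle = \sin\theta\,m\,\|\nabla H\|^2 < 0 \qquad \text{off the origin,}
\]
so that $H$ --- continuous, positive definite, radially unbounded, differentiable at the origin with $\nabla H(0)=0$, and strictly decreasing along nontrivial solutions --- certifies (global) asymptotic stability.

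\textbf{Ruling out low-degree rational Lyapunov functions.} Fix $s$; I claim that for all sufficiently small $\theta>0$, $f_\theta$ has no rational Lyapunov function with numerator degree $\le s$. Suppose $V=p/q$ were one, $p,q$ homogeneous. Positive definiteness and continuity of $V$ force (after a harmless sign change) $p,q$ positive definite and $\deg p-\deg q=2e$ a positive even integer with $1\le e\le s/2$; hence $V$ is homogeneous of degree $2e$, and in polar coordinates $V=\rho^{2e}v(\phi)$ with $v=P/Q$, where $P(\phi)=p(\cos\phi,\sin\phi)$ and $Q(\phi)=q(\cos\phi,\sin\phi)$ are positive trigonometric polynomials of degree $\le s$. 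Writing $f_\theta$ in polar form $\dot\rho=\rho^3 g_\theta(\phi)$, $\dot\phi=\rho^2 h_\theta(\phi)$, the inequality $\dot V<0$ becomes $2e\,g_\theta v+h_\theta v'<0$; for small $\theta$ one has $h_\theta>0$, so this reads $(\ln v)'<G_\theta\coloneqq -2e\,g_\theta/h_\theta$. Integrating over the circle and using periodicity of $v$ gives $\|G_\theta-(\ln v)'\|_{L^1}=\oint G_\theta$; a short computation shows $\oint G_0=0$ (the $\theta=0$ integrand has a periodic antiderivative) and $G_\theta\to G_0$ uniformly, both at rate $O(\theta)$, while $G_0=\big(\ln(1+\cos^2\phi)^{\beta}\big)'$ with $\beta\coloneqq e\lambda/(1+\lambda)$. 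Passing to mean-normalized antiderivatives, exponentiating, and rescaling $V$ by a positive constant so that $v$ and $(1+\cos^2\phi)^{\beta}$ have equal logarithmic mean, this yields the key estimate $\big\|v-(1+\cos^2\phi)^{\beta}\big\|_{C(\mathbb T)}\le C(\lambda,s)\,\theta$, \emph{with $C$ independent of the particular $v$} (the uniformity comes from $e\le s/2$ together with the fact that $\|G_\theta-(\ln v)'\|_{L^1}=\oint G_\theta$ does not depend on $v$).

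\textbf{Conclusion of the second part.} It remains to show that $(1+\cos^2\phi)^{\beta}$ is uniformly bounded away, over all $e\le s/2$, from the set $\mathcal R_s$ of quotients of positive trigonometric polynomials of degree $\le s$. First, $\mathcal R_s$ is closed in $C(\mathbb T)$: given a uniform limit $P_n/Q_n\to u$ with $\|Q_n\|_\infty=1$, finite-dimensional compactness yields (along a subsequence) $Q_n\to Q_*\ge 0$ and hence $P_n\to Q_* u$, both limits being trigonometric polynomials of degree $\le s$, and cancelling the (necessarily shared) zeros of $Q_*$ shows $u\in\mathcal R_s$. Second, $(1+\cos^2\phi)^{\beta}\notin\mathcal R_s$ whenever $\beta\notin\mathbb Z$: writing $(1+\cos^2\phi)^{\beta}=P/Q$, taking logarithmic $\phi$-derivatives and substituting $z=e^{i\phi}$ produces an identity of rational functions of $z$ whose right-hand side equals $2\beta(z^4-1)/(z^4+6z^2+1)$ and thus has a simple pole with nonzero residue at each root $z_0$ of $z^4+6z^2+1$; matching that pole on the left-hand side (where it can only arise from $z\hat P'/\hat P$ or $z\hat Q'/\hat Q$, whose residue at $z_0$ is $z_0$ times an integer multiplicity) forces, after simplification, that multiplicity to equal $|\beta|$ --- impossible, since $\beta=e\lambda/(1+\lambda)$ is irrational whenever $\lambda$ is and $e\ge 1$. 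Therefore $\delta\coloneqq\min_{1\le e\le s/2}\operatorname{dist}_{C(\mathbb T)}\!\big((1+\cos^2\phi)^{e\lambda/(1+\lambda)},\mathcal R_s\big)>0$, and any $\theta>0$ with $C(\lambda,s)\theta<\delta$ (and small enough that $h_\theta>0$) admits no rational Lyapunov function with numerator degree $\le s$.

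\textbf{Where the difficulty lies.} The asymptotic-stability half is routine once $H$ is discovered; the only mild nuisance is that $H$ is merely differentiable, not $C^1$, at the origin when $\lambda\le\tfrac12$, which still suffices for the Lyapunov argument. The real work is the second half, and its crux is that the candidate angular profile $v$ depends on $\theta$, so one cannot simply ``set $\theta=0$''. The remedy is the $\theta$-uniform estimate $\|v-(1+\cos^2\phi)^{\beta}\|_\infty=O(\theta)$ above, which turns the problem into the purely algebraic statement that $(1+\cos^2\phi)^{\mathrm{irrational}}$ is not a bounded-degree rational trigonometric function; I expect the residue computation and the verification that $\mathcal R_s$ is closed in $C(\mathbb T)$ to be the two points requiring the most care.
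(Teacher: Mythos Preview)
Your argument is correct, and for the asymptotic-stability half it coincides with the paper's: both use the first integral $H(x,y)=(x^2+y^2)(2x^2+y^2)^\lambda$ of $f_0$ as a Lyapunov function for $f_\theta$, with $\dot H$ proportional to $-\sin\theta$ times a positive definite quantity. (One small correction: for $\lambda>0$ the function $H$ is in fact $C^1$ at the origin, since $|\nabla H|=O(\rho^{1+2\lambda})$; your worry about mere differentiability is unnecessary.)

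For the second half your route genuinely differs from the paper's. The paper argues by a soft compactness argument on coefficients: normalizing the numerator and denominator of a hypothetical family $W_\theta=p_\theta/q_\theta$, one extracts a subsequential limit $W_0=p_0/q_0$ as $\theta\to 0$; since the orbits of $f_0$ are periodic (traversing the level sets of $H$), the inequality $\dot W_0\le 0$ forces $\dot W_0\equiv 0$, so $W_0$ is a nonzero homogeneous \emph{rational} first integral of $f_0$, and a short holomorphic-continuation argument (their Lemma~5.2) rules this out when $\lambda$ is irrational. Your approach trades this compactness step for an explicit quantitative estimate in polar coordinates, showing that any angular profile $v$ must satisfy $\|v-(1+\cos^2\phi)^{\beta}\|_\infty\le C(\lambda,s)\theta$ uniformly, and then separately establishing that $(1+\cos^2\phi)^\beta$ sits at positive $C(\mathbb T)$-distance from the set $\mathcal R_s$ of bounded-degree rational trigonometric functions. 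The two proofs share the same algebraic core---the irrationality of $\lambda$ prevents $(1+\cos^2\phi)^{\beta}$ (equivalently, $H^{e/(1+\lambda)}$) from being rational---and your residue computation is essentially the trigonometric reformulation of the paper's holomorphic argument. What your approach buys is an effective bound: in principle one can read off an explicit $\theta_0(s,\lambda)$ below which no rational Lyapunov function with numerator degree $\le s$ exists, whereas the paper's compactness argument is nonconstructive. The price is the extra work you correctly flag: the closure of $\mathcal R_s$ (which, strictly speaking, fails globally---e.g.\ $1/n\to 0\notin\mathcal R_s$---but holds near any strictly positive limit, which is all you need here) and the residue bookkeeping. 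The paper's version is shorter and avoids both of these steps.
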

}

\aaa{The intuition behind this construction is that as $\theta\rightarrow 0,$ this sequence of vector fields converges to a limit vector field whose trajectories are periodic orbits that cannot be contained within level sets of any rational function. This limit behavior is formalized in the next lemma, which will be used in the proof of the above proposition.}


\aaa{
\begin{lemma}\label{prop:Bacciotti.Rosier}
Consider the vector field
\begin{equation}
\label{eq:Bacciotti.Rosier.f0}
\begin{pmatrix}\dot x\\\dot y\end{pmatrix} = 
f(x, y) = \left\{\begin{array}{lll}
-2\lambda y(x^2+y^2)-2y(2x^2+y^2) \\
4\lambda x(x^2+y^2)+2x(2x^2+y^2)
\end{array}\right.
\end{equation}
parameterized by a scalar \(\lambda > 0\). For all values of \(\lambda\), the origin is a center\footnote{By this we mean that all trajectories of (\ref{eq:Bacciotti.Rosier.f0}) go on periodic orbits which form closed curves around the origin.} of (\ref{eq:Bacciotti.Rosier.f0}), but for any irrational value of \(\lambda\), there exist no two bivariate polynomials \(p\) and \(q\) such that the rational function $$W(x, y) \coloneqq \frac{p(x, y)}{q(x, y)}$$
is nonzero, homogeneous, differentiable, and satisfies 
$$\langle \nabla W(x, y), f(x, y) \rangle = 0 \quad \text{for all } (x, y) \in \mathbb R^2.$$
\end{lemma}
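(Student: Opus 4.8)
The plan is to analyze the vector field \eqref{eq:Bacciotti.Rosier.f0} directly. First I would verify the center claim: switching to polar coordinates $x=\rho\cos\phi$, $y=\rho\sin\phi$, one computes $\dot\rho$ and $\dot\phi$ and checks that $\dot\rho = 0$ identically (so every trajectory stays on a circle $\rho = \text{const}$), while $\dot\phi = \rho^2 \, g(\phi)$ for some trigonometric polynomial $g(\phi)$ that never vanishes on $[0,2\pi)$. The non-vanishing of $g$ is what makes these closed curves genuine periodic orbits encircling the origin. Since $\dot\phi$ has constant sign, each circle is traversed periodically, establishing that the origin is a center for all $\lambda > 0$.

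Next I would set up the obstruction to a rational first integral. Suppose $W = p/q$ is homogeneous, differentiable, nonzero, and satisfies $\langle \nabla W, f\rangle = 0$. Being homogeneous of some degree $\delta$, $W$ is constant along the flow; but since every trajectory lives on a circle of fixed radius, $W$ restricted to each invariant circle is constant. Writing $W$ on the circle of radius $\rho$ as $\rho^\delta\, h(\phi)$ for $h(\phi) = W(\cos\phi,\sin\phi)$, the invariance $\frac{d}{dt}W = 0$ forces $h'(\phi)\,\dot\phi = 0$ along trajectories; since $\dot\phi\neq 0$, we get $h'(\phi) \equiv 0$, i.e., $h$ is constant, so $W$ is a constant multiple of $\rho^\delta = (x^2+y^2)^{\delta/2}$. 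For this to be a ratio of two polynomials with $W$ nonzero, $\delta$ must be an even nonnegative integer and $W$ is a scalar multiple of $(x^2+y^2)^{\delta/2}$.

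The final step is to rule this out using the irrationality of $\lambda$. The idea is that $W = c\,(x^2+y^2)^{\delta/2}$ being a first integral is too strong: I would instead extract a finer invariant. Along the flow, consider the \emph{period} or, more robustly, the quantity obtained by linearizing the return map, or simply compare $W$ with a genuine (non-polynomial) first integral built from integrating $d\phi/g(\phi)$. The point of the construction in \cite{Bacciotti.Rosier.Liapunov.Book} is that the "angular" first integral involves $\int d\phi/g(\phi)$, which, because of the specific coefficients $2\lambda$ versus $2$ and $4\lambda$ versus $2$, evaluates (over a full or partial period) to a transcendental expression incommensurable with any rational function unless $\lambda$ is rational — more concretely, the two "resonant directions" along the axes contribute logarithmic terms whose ratio is $\lambda$, so a rational (hence algebraic) first integral would force $\lambda$ to be a root of a polynomial with rational coefficients. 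I expect the main obstacle to be this last step: making precise why a rational homogeneous first integral cannot exist beyond the trivial power of $x^2+y^2$, which requires either a careful Darboux-type analysis of invariant algebraic curves of \eqref{eq:Bacciotti.Rosier.f0} (the lines $x=0$ and $y=0$ are invariant, and their "cofactors" have a ratio involving $\lambda$) or a direct argument that any polynomial $p$ with $\langle\nabla(p/q),f\rangle=0$ must have its zero set be a union of invariant curves, of which the only options are $\{x=0\}$, $\{y=0\}$, and the circles — and the circles are not algebraic curves, while $x^a y^b$ is not homogeneous-ratio-compatible with the flow unless $a=b=0$ by the irrationality of the cofactor ratio $\lambda$.
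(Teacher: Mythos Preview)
Your proof contains a genuine computational error at the very first step that propagates through the rest of the argument. You assert that in polar coordinates $\dot\rho=0$ identically, so that trajectories are circles. But a direct computation gives
\[
x\dot x + y\dot y \;=\; -2\lambda xy(x^2+y^2)-2xy(2x^2+y^2)+4\lambda xy(x^2+y^2)+2xy(2x^2+y^2)\;=\;2\lambda xy(x^2+y^2),
\]
so $\rho\dot\rho=2\lambda xy\,\rho^{2}\not\equiv 0$. The orbits are \emph{not} circles; hence your conclusion that any homogeneous first integral $W$ must equal $c\,(x^2+y^2)^{\delta/2}$ is unfounded, and the whole ``final step'' (ruling out powers of $x^2+y^2$, Darboux analysis of the lines $x=0$ and $y=0$, etc.) is aimed at the wrong target. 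Incidentally, the lines $x=0$ and $y=0$ are not invariant either: on $\{x=0\}$ one has $\dot x=-2(\lambda+1)y^{3}\neq 0$.

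What actually happens is that the system has the \emph{non-polynomial} first integral
\[
I(x,y)=(x^2+y^2)(2x^2+y^2)^{\lambda},
\]
and the closed orbits are the level sets of $I$, not of $x^2+y^2$. The paper's argument proceeds by showing that any homogeneous rational first integral $W$ of degree $k$ must be constant on $\{I=1\}$ and hence, by homogeneity, equal to $cI^{k/(2(1+\lambda))}$ everywhere. Restricting to $x=1$ turns this into the identity
\[
p(1,y)=c\,(1+y^{2})^{\frac{k}{2(1+\lambda)}}(2+y^{2})^{\frac{k\lambda}{2(1+\lambda)}}\,q(1,y),
\]
and the contradiction with irrational $\lambda$ comes from a short complex-analytic argument: extend holomorphically and examine the order of vanishing at $y=i$, which forces $\frac{k}{2(1+\lambda)}$ to be an integer. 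Your intuition that ``$\lambda$ irrational blocks a rational relation'' is correct in spirit, but the mechanism is this exponent calculation, not a cofactor ratio along invariant lines.
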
}
\begin{proof}
For the proof of the first claim see \cite[Prop.5.2]{Bacciotti.Rosier.Liapunov.Book}. Our proof technique for the second claim is also similar to \cite[Prop.5.2]{Bacciotti.Rosier.Liapunov.Book}, except for some minor differences. Suppose for the sake of contradiction that such a function \(W(x, y)\) exists. Let \(k\) denote the degree of homogeneity of $W$.
We first observe that the function 
$$I(x, y) = (x^2+y^2) (2x^2+y^2)^{\lambda}$$
satisfies $\langle \nabla I(x, y), f(x, y) \rangle = 0.$
Therefore, on the level set $$\{ (x, y) \in \mathbb R^2 \; | \; I(x, y) = 1\},$$
\(W(x, y)\) must be equal to a nonzero constant \(c\).
A homogeneity argument shows that
$$W(x, y) = c I(x, y)^{\frac k{2(1+\lambda)}} \text{ for all } (x, y) \in \mathbb R^2.$$
Hence, by setting \(x = 1\),
\begin{equation}
\label{eq-restriction-p}
p(1, y) = c (1+y^2)^{\frac k{2(1+\lambda)}} (2 + y^2)^{\frac {k\lambda}{2(1+\lambda)}} q(1, y)\text{ for all } y \in \mathbb R.
\end{equation}
Let \(r\) be the largest nonnegative integer such that  $$q(1, y) = (1+y^2)^r \hat q(y),$$ where \(\hat q\) is a univariate polynomial. As a result, $\hat q$ must satisfy \(\hat q(i) \ne 0\), where $i = \sqrt{-1}$.
 Then, from (\ref{eq-restriction-p}), we conclude that

\begin{equation}
\label{eq-restriction-p2}
p(1, y) = c (1+y^2)^{r+\frac k{2(1+\lambda)}} (2 + y^2)^{\frac {k\lambda}{2(1+\lambda)}} \hat q(y)\text{ for all } y \in \mathbb R.
\end{equation}

The function \(y \rightarrow (2 + y^2)^{\frac {k\lambda}{2(1+\lambda)}}\hat q(y)\) can be prolonged to a holomorphic function on the open set $$O_1 \coloneqq \mathbb C \setminus \{y=i v |\  |v| \ge \sqrt{2} \} .$$  

Furthermore, since $\hat q(i) \ne 0$, there exists an open neighborhood $O_2$ of $i$ where $\hat q$ does not vanish. On the open set $O_1 \cap O_2$, the function $$y \rightarrow (2 + y^2)^{\frac {k\lambda}{2(1+\lambda)}}\hat q(y)$$ is holomorphic and does not vanish, and hence by (\ref{eq-restriction-p2}), the function $$y \rightarrow (1+y^2)^{r+\frac k{2(1+\lambda)}}$$ is also holomorphic on $O_1 \cap O_2$. As a consequence, there exist an integer \(\bar n\) and a number \(\alpha \in \mathbb C \setminus \{0\}\) such that $$\frac{(1+y^2)^{r+\frac k{2(1+\lambda)}}}{(y-i)^{\bar n}} \rightarrow  \alpha$$ as \(y \rightarrow i\). This implies that $$r+\frac{k}{2(1+\lambda)} = \bar n$$ and contradicts the assumption that \(\lambda\) is an irrational number.
\end{proof}

\noindent\hspace{2em}{\itshape \aaa{Proof of Proposition \ref{prop:no.finite.bound}:}}
Consider the positive definite Lyapunov function\footnote{This function is not a polynomial, which can be seen e.g. by noticing that the restriction \(V(x, x) = 3^{\lambda} 2 x^{2(\lambda+1)}\) is not a polynomial.}
$V(x,y)=(2x^2+y^2)^\lambda(x^2+y^2)$,
whose derivative along the trajectories of
(\ref{eq:a.s.cubic.vec.arbitrary.high.Lyap}) is equal to $$\dot{V}(x,y)=-\sin(\theta)(2x^2+y^2)^{\lambda-1}(\dot{x}^2+\dot{y}^2).$$
Since \(\dot{V}\) is negative definite for \(0<\theta<\pi\), it follows that for \(\theta\) in this range, the origin of (\ref{eq:a.s.cubic.vec.arbitrary.high.Lyap}) is asymptotically stable.

To establish the latter claim of the \aaa{proposition}, suppose for the sake of contradiction that there exists an upper bound \(\bar{s}\) such that for all \(0<\theta<\pi\) the system admits a rational Lyapunov function $$W_{\theta}(x, y) = \frac{p_{\theta}(x, y)}{q_{\theta}(x, y)},$$ where \(p_{\theta}\) and \(q_\theta\) are both homogeneous polynomials and \(p_{\theta}\) is of degree at most \(\bar s\) independently of \(\theta\). Note that as a Lyapunov function, \(W_{\theta}\) must vanish at the origin, and therefore the degree of \(q_{\theta}\) is also bounded by \(\bar s\).
By rescaling, we can assume without loss of generality that the 2-norm of the coefficients of all polynomials \(p_{\theta}\) and \(q_{\theta}\) is 1.

Let us now consider the sequences \(\{p_\theta\}\) and \(\{q_{\theta}\}\) as \(\theta\rightarrow 0\). These sequences reside in the compact set of bivariate homogeneous polynomials of degree at most \(\bar s\) with the 2-norm of the coefficients equal to 1. Since every bounded sequence has a converging subsequence, it follows that there must exist a subsequence of \(\{p_\theta\}\) (resp. \(\{q_\theta\}\)) that converges (in the coefficient sense) to some nonzero  homogeneous polynomial \(p_0\) (resp. \(q_0\)). Define $$W_0(x, y) \coloneqq \frac{p_0(x, y)}{q_0(x, y)}.$$
Since convergence of this subsequence also implies convergence of the associated gradient vectors, we get that
$$\dot{W}_0(x,y)= \langle \nabla W_{0}(x, y), \begin{pmatrix}\dot x\\\dot y\end{pmatrix} \rangle\leq0.$$
On the other hand, when \(\theta=0\), the vector field in (\ref{eq:a.s.cubic.vec.arbitrary.high.Lyap}) is the same as the one in (\ref{eq:Bacciotti.Rosier.f0}) and hence the trajectories starting from any nonzero initial condition go on periodic orbits. This however implies that \(\dot{W}_0=0\) everywhere and in view of \bachir{Lemma \ref{prop:Bacciotti.Rosier}} we have a contradiction.
\hspace{\fill}\QEDmark

\begin{remark}
It is possible to establish the result of Proposition~\ref{prop:no.finite.bound} without having to use irrational coefficients in the vector field. One approach is to take an irrational number, e.g. \(\pi\), and then think of a sequence of vector fields given by (\ref{eq:a.s.cubic.vec.arbitrary.high.Lyap}) that is parameterized by both \(\theta\) and \(\lambda\). We let the \(k\text{-th}\) vector field in the sequence have \(\theta_k=\frac{1}{k}\) and \(\lambda_k\) equal to a rational number representing \(\pi\) up to \(k\) decimal digits.
Since in the limit as \(k\rightarrow\infty\) we have \(\theta_k\rightarrow 0\) and \(\lambda_k\rightarrow \pi\), it should be clear from the proof of Proposition~\ref{prop:no.finite.bound} that for any integer \(s\), there exists an asymptotically stable bivariate homogeneous cubic vector field with \emph{rational} coefficients that does not have a Lyapunov \(\frac{p(x,y)}{q(x,y)}\) where \(p\) and \(q\) are homogeneous and \(p\) has degree less than \(s\).
\end{remark}


\section{\aaa{Potential} advantages of rational Lyapunov functions over polynomial ones}
\label{sec:advantages_ratioanal_lyap}

\aaa{In this section, we show that there are stable polynomial vector fields for which a polynomial Lyapunov function would need to have much higher degree than the sum of the degrees of the numerator and the denominator of a rational Lyapunov function. The reader can also observe that independently of the integer $r$, the size of the SDP arising form Theorem~\ref{thm:rat_hom_sdp} that searches for a rational Lyapunov function with a numerator of degree $s$ and a denominator of degree $2r$ is smaller than the size of an SDP that would search for a polynomial Lyapunov function $p$ of degree $s+2$ (by requiring $p$ and $-\dot{p}$ to be sums of squares), even when $p$ is taken to be homogeneous. Therefore, for some vector fields, a search for a rational Lyapunov function instead of a polynomial one can be advantageous.}

\aaa{
\begin{proposition}
\label{prop:unboundedness.poly.lyap}
Consider the following homogeneous polynomial vector field parameterized by the scalar \(\theta\):

\scalefont{.95}
\begin{equation}
\label{eq:a.s.deg5.vec.arbitrary.high.poly.Lyap}
\begin{pmatrix}
\dot x\\\dot y
\end{pmatrix} = f_{\theta}(x ,y) =2 R(\theta)\begin{pmatrix} x\, \left(\ \ x^4 + 2\, x^2\, y^2 - y^4\right)\\ y\, \left( - x^4 + 2\, x^2\, y^2 + y^4\right)\end{pmatrix},
\end{equation}
\normalsize
where \[
R(\theta) \coloneqq \begin{pmatrix}-\sin(\theta) & -\cos(\theta)\\\cos(\theta)& -\sin(\theta)\end{pmatrix}.
\]
Then, for $\theta \in (0, \pi)$, the vector field $f_{\theta}$ admits the following rational Lyapunov function
$$W(x, y) = \frac{x^4+y^4}{x^2+y^2}$$
and hence is asymptotically stable.
However, for any positive integer \(\bar s\), there exits a scalar \(\theta \in (0, \pi)\)  such that $f_{\theta}$ does not admit a polynomial Lyapunov function of degree \(\leq \bar s\).
\end{proposition}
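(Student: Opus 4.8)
The plan is to handle the two assertions separately. For the first, I would verify directly that the given $W(x,y)=\frac{x^4+y^4}{x^2+y^2}$ is a Lyapunov function for $f_\theta$ with $\theta\in(0,\pi)$. Note $W$ is homogeneous of degree $2$, continuously differentiable on $\mathbb R^2$ (it is $O(\|(x,y)\|^2)$ near the origin), vanishes at the origin, and is positive definite and radially unbounded since $x^4+y^4>0$ and $x^2+y^2>0$ off the origin. The one computation to carry out is the identity
$$\begin{pmatrix} x\,(x^4+2x^2y^2-y^4)\\ y\,(-x^4+2x^2y^2+y^4)\end{pmatrix} = \tfrac12\,(x^2+y^2)^2\,\nabla W(x,y),$$
which exhibits the unrotated part of $f_\theta$ as a positive multiple of $\nabla W$. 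Consequently $f_\theta=(x^2+y^2)^2\,R(\theta)\,\nabla W$, and writing $R(\theta)=-\sin(\theta)\,I+\cos(\theta)\,\mathcal J$ with $\mathcal J=\left(\begin{smallmatrix}0&-1\\ 1&0\end{smallmatrix}\right)$ and using $\langle v,\mathcal Jv\rangle\equiv 0$ gives
$$\dot W=\langle\nabla W,f_\theta\rangle=-(x^2+y^2)^2\sin(\theta)\,\|\nabla W\|^2.$$
Since $\nabla W$ vanishes only at the origin (equivalently, the vector field on the left above has the origin as its only zero), $\dot W$ is negative definite for $\theta\in(0,\pi)$, and asymptotic stability then follows from the ``if'' direction of Theorem~\ref{thm:rat_lyap_for_hom_gas}.

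For the second assertion, the plan is to imitate the proof of Proposition~\ref{prop:no.finite.bound}. Assuming for contradiction that some bound $\bar s$ works for all $\theta\in(0,\pi)$, I would, for each such $\theta$, take a polynomial Lyapunov function $W_\theta$ of degree $\le\bar s$ normalized so that its coefficient vector has $2$-norm $1$. These $W_\theta$ lie in a compact set, so a subsequence as $\theta\to0$ converges---in coefficients, hence together with gradients uniformly on compact sets---to a nonzero polynomial $W_0$ of degree $\le\bar s$ with $W_0(0)=0$ and $\langle\nabla W_0,f_0\rangle\le 0$ everywhere, $f_0$ being the $\theta=0$ vector field. From the identity above, $f_0=(x^2+y^2)^2\,\mathcal J\nabla W$; since $W$ is positive definite homogeneous and $f_0$ has no zero other than the origin, the level sets of $W$ are closed curves around the origin and every nonzero orbit of $f_0$ is periodic (the center statement; cf. Lemma~\ref{prop:Bacciotti.Rosier} and \cite{Bacciotti.Rosier.Liapunov.Book}). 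Integrating $\tfrac{d}{dt}W_0$ over one period of an arbitrary orbit forces $\langle\nabla W_0,f_0\rangle\equiv 0$, and splitting $W_0$ into homogeneous parts $W_0^{(j)}$---whose contributions $\langle\nabla W_0^{(j)},f_0\rangle$ have the pairwise distinct degrees $j+4$---shows each $W_0^{(j)}$ is a first integral; picking $j\ge 1$ with $W_0^{(j)}\ne 0$ produces a nonzero homogeneous polynomial first integral of $f_0$.

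The crux is then to rule this out. A homogeneous polynomial first integral $p$ of $f_0$ is constant on the orbits of $f_0$, i.e. on the level sets of $W$; since $W\colon\mathbb R^2\setminus\{0\}\to(0,\infty)$ is onto with connected level sets and no critical points off the origin, $p=\Phi\circ W$ for some function $\Phi$ on $(0,\infty)$, and comparing degrees of homogeneity forces $\Phi(t)=c\,t^{k/2}$ with $c\ne 0$ and $k=\deg p$. Squaring the resulting identity $p=c\,W^{k/2}$ yields $p^2\,(x^2+y^2)^k=c^2\,(x^4+y^4)^k$ as polynomials. But $x^2+y^2$ and $x^4+y^4$ are coprime in $\mathbb C[x,y]$---neither $x-iy$ nor $x+iy$ divides $x^4+y^4$, since the latter equals $2y^4$ on each of the lines $x=\pm iy$---so $(x^2+y^2)^k$ divides $(x^4+y^4)^k$ only if $k=0$, contradicting $k\ge 1$.

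I expect this last step to be the main obstacle, since it is where all the nontrivial content sits; the coprimality argument works cleanly here precisely because $W_0$ is assumed to be a genuine polynomial (for rational Lyapunov functions one would need instead the holomorphic-prolongation argument of Lemma~\ref{prop:Bacciotti.Rosier}). The two remaining ingredients---the gradient identity underpinning the first assertion and the normalization-and-compactness passage to the limit underpinning the second---are routine and follow the pattern already set in the proof of Proposition~\ref{prop:no.finite.bound}.
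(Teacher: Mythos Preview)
Your proposal is correct. The first assertion is handled exactly as in the paper: both verify $f_\theta=(x^2+y^2)^2R(\theta)\nabla W$ and compute $\dot W=-(x^2+y^2)^2\sin(\theta)\|\nabla W\|^2$. For the second assertion the compactness/limit passage to a nonzero polynomial $W_0$ with $\langle\nabla W_0,f_0\rangle\le 0$ is also the same.

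The final contradiction, however, is reached by a genuinely different route. The paper keeps the (possibly inhomogeneous) limit polynomial $p_0$, uses the Poincar\'e--Bendixson criterion to ensure each level curve $\{W=\gamma\}$ carries a periodic orbit, deduces $p_0=g\circ W$ for some function $g$, exploits the symmetries $p_0(\pm x,\pm y)=p_0(x,y)$ to write $p_0(x,y)=p(x^2,y^2)$, and then invokes a separate lemma (Lemma~\ref{lem:non_existence_poly}) stating that no polynomials $\tilde p,\tilde q$ with $\tilde q$ non-constant satisfy $\tilde p(x^2)=\tilde q\bigl((x^4+1)/(x^2+1)\bigr)$, proved by analytically continuing in $u$ and letting $u\to -1$. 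Your route is more algebraic: you first decompose $W_0$ into homogeneous components to extract a nonzero \emph{homogeneous} polynomial first integral $p$ of degree $k\ge 1$, which by a homogeneity comparison must equal $c\,W^{k/2}$, and then the coprimality of $x^2+y^2$ and $x^4+y^4$ over $\mathbb C[x,y]$ kills the resulting identity $p^2(x^2+y^2)^k=c^2(x^4+y^4)^k$. Your argument is self-contained (no auxiliary lemma), bypasses Poincar\'e--Bendixson by reading periodicity directly from the first-integral structure $\langle\nabla W,f_0\rangle=0$, and stays purely polynomial; the paper's argument, on the other hand, never needs the homogeneous decomposition and works entirely over the reals until the very end.
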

}

\aaa{Once again, the intuition is that as  $\theta\rightarrow 0,$ $f_\theta$ converges to a vector field $f_0$ whose trajectories are periodic orbits. This time however, these orbits will exactly traverse the level sets of the rational function $W$ and cannot be contained within level sets of any polynomial.} 
%
Our proof will utilize the following independent lemma about univariate polynomials.

\begin{lemma}
  \label{lem:non_existence_poly}
  There exist no two univariate polynomials $\tilde p$ and $\tilde q$, with $\tilde q$ non-constant, that satisfy
  $$\tilde p(x^2) = \tilde q\left(\frac{x^4+1}{x^2+1}\right) \quad \forall x \in \mathbb R.$$
\end{lemma}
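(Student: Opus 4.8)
The plan is to argue by contradiction: suppose such polynomials $\tilde p$ and $\tilde q$ exist, with $\tilde q$ non-constant. Introduce the substitution $t = x^2$, which ranges over $[0,\infty)$, so the identity becomes $\tilde p(t) = \tilde q\!\left(\frac{t^2+1}{t+1}\right)$ for all $t \ge 0$, and since both sides are polynomial (well, the right-hand side is a polynomial in the rational expression) this must in fact be an identity of rational functions in $t$. The first key step is to understand the map $t \mapsto g(t) := \frac{t^2+1}{t+1}$: I would compute that $g$ is not injective on the reals, or more precisely analyze when $g(t_1) = g(t_2)$ for $t_1 \ne t_2$. Solving $\frac{t_1^2+1}{t_1+1} = \frac{t_2^2+1}{t_2+1}$ leads, after clearing denominators, to a relation like $(t_1 - t_2)(t_1 t_2 + t_1 + t_2 - 1) = 0$, so $g(t_1) = g(t_2)$ exactly when $t_1 t_2 + t_1 + t_2 = 1$, i.e. $t_2 = \frac{1 - t_1}{1 + t_1}$. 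Call this involution $\iota(t) = \frac{1-t}{1+t}$; it satisfies $g \circ \iota = g$.

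The second key step is then: if $\tilde p(t) = \tilde q(g(t))$ as rational functions, then $\tilde p$ must be invariant under $\iota$, that is $\tilde p(\iota(t)) = \tilde p(t)$ for all $t$. But $\iota$ is a Möbius transformation of order $2$ with a pole at $t = -1$ and $\iota(\infty) = -1$. A polynomial $\tilde p$ that is invariant under $\iota$ would have to have the same behavior at $t = -1$ as at $t = \infty$: since $\tilde p$ is a polynomial it is finite at $t=-1$, so $\tilde p \circ \iota$ is finite at $\infty$ only if $\tilde p$ is constant — more carefully, $\tilde p(\iota(t))$ as $t \to \infty$ tends to $\tilde p(-1)$, a finite value, whereas if $\deg \tilde p \ge 1$ then $\tilde p(t) \to \pm\infty$ as $t\to\infty$, contradicting $\tilde p \circ \iota = \tilde p$. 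Hence $\tilde p$ is constant, which forces $\tilde q(g(t))$ to be constant; since $g$ is non-constant this forces $\tilde q$ to be constant, contradicting the hypothesis. (One should double-check the edge case that the original identity only holds for $x \in \mathbb R$, i.e. $t \ge 0$: but two rational functions agreeing on an infinite set agree everywhere, so this is fine, and one should also confirm $g$ and $\iota$ don't interact badly at the excluded point $t = -1$, which lies outside $[0,\infty)$ anyway.)

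I expect the main obstacle — really the only subtle point — to be making precise the claim ``$\tilde p$ is invariant under $\iota$.'' The clean way is: $g : \mathbb{P}^1 \to \mathbb{P}^1$ is a degree-$2$ rational map, so $\mathbb{C}(t)$ is a degree-$2$ extension of $\mathbb{C}(g)$, and its nontrivial automorphism fixing $\mathbb{C}(g)$ is precisely $t \mapsto \iota(t)$; anything in $\mathbb{C}(g)$, in particular $\tilde q(g(t))= \tilde p(t)$, must be fixed by this automorphism. Alternatively, and more elementarily, one argues directly: for generic $c$ in the image of $g$, the equation $g(t) = c$ has two roots $t$ and $\iota(t)$, and $\tilde p$ takes the value $\tilde q(c)$ at both, so $\tilde p(t) = \tilde p(\iota(t))$ on an infinite set hence identically. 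Either route is short; the rest is the elementary computation of $\iota$ and the growth/pole argument. A variant ending, avoiding Möbius language entirely, is to evaluate the identity at $x$ and at a second value $x'$ with $x'^2 = \iota(x^2)$ and compare leading terms / degrees directly, but the structural argument above is cleaner to write.
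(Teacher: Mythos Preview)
Your argument is correct, but it takes a longer route than the paper's. After the same substitution $u=x^2$ and the same observation that the identity $\tilde p(u)=\tilde q\!\left(\frac{u^2+1}{u+1}\right)$ extends to an identity of rational functions, the paper simply lets $u\to -1$: the left side tends to the finite value $\tilde p(-1)$, while $\frac{u^2+1}{u+1}\to\infty$ and hence the right side diverges because $\tilde q$ is non-constant --- contradiction in one step. Your proof instead discovers the involution $\iota(t)=\frac{1-t}{1+t}$ satisfying $g\circ\iota=g$, deduces $\tilde p\circ\iota=\tilde p$, and then takes $t\to\infty$ to reach the same finite-vs-infinite clash. The involution detour is not needed for the bare statement, but it does buy genuine structural insight: it explains \emph{why} the obstruction sits at $u=-1$ (that point is $\iota(\infty)$, the image of the pole of the Galois automorphism of the degree-$2$ cover $g$), and the Galois viewpoint would generalize cleanly to other degree-$2$ rational maps. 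The paper's version, by contrast, buys brevity --- it is essentially two lines once the substitution is made.
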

\begin{proof}
  Assume for the sake of contradiction that such polynomials exist. For every nonnegative scalar $u$, there exists a scalar $x$ such that $u = x^2$. Therefore,
  $$\tilde p(u) = \tilde q\left(\frac{u^2+1}{u+1}\right) \quad \forall u \ge 0.$$

  The expression above is an equality between two univariate rational functions valid on $[0, \infty)$. Since both rational functions are well-defined on $(-1, \infty]$, the equality holds on that interval as well:
  $$\tilde p(u) = \tilde q\left(\frac{u^2+1}{u+1}\right) \quad \forall u > -1.$$

  We get a contradiction by taking $u \rightarrow -1$ as the left hand side converges to $\tilde p(-1)$, while the right hand side diverges to $\infty$.
\end{proof}
\noindent\hspace{2em}{\itshape \aaa{Proof of Proposition \ref{prop:unboundedness.poly.lyap}:}}
  Let us first prove that $W$ is a rational Lyapunov function associated with the vector field $f_{\theta}$ whenever  \(\theta \in (0, \pi)\). It is clear that $W$ is positive definite and radially unbounded. A straightforward calculation shows that
    $$f_{\theta}(x, y) = R(\theta) \; (x^2+y^2)^2 \; \nabla W(x, y).$$
Hence,
  \begin{align*}
    -\dot W(x, y) &= - \langle \nabla W(x, y), f_{\theta}(x, y) \rangle
    \\&= - (x^2+y^2)^2 \nabla W(x, y)^T R(\theta) \nabla W(x, y)
    \\&=  \sin(\theta) (x^2+y^2)^2 \|\nabla W(x, y)\|^2.
    \end{align*}

    Note that the function $\|\nabla W\|^2$ is positive definite as 
    $$W(x, y) = \frac12 \langle \begin{pmatrix}x\\y\end{pmatrix}, \nabla W(x, y) \rangle$$
    and $W$ is positive definite. This proves that when $0 < \theta < \pi$, the vector field $f_{\theta}$ is asymptotically stable with $W$ as a Lyapunov function.

    To prove the latter claim of the \aaa{proposition}, suppose for the sake of contradiction that there exists an upper bound \(\bar s\) such that for all \(0<\theta<\pi\) the system admits a polynomial Lyapunov function of degree at most \(\bar s\). By an argument similar to that in the proof of \aaa{Proposition \ref{prop:no.finite.bound}}, there must exist some nonzero polynomial \(p_0\), with $p_0(0) = 0$, that satisfies
    $$\dot p_0(x, y) \coloneqq \langle \nabla p_0(x, y), f_0(x, y) \rangle \le 0 \quad \forall (x, y) \in \mathbb R^2.$$

    We claim that $p_0$ must be constant on the level sets of $W$. To prove that, consider an arbitrary positive scalar $\gamma$ and the level set $$M_\gamma \coloneqq \{(x, y) \in \mathbb R^2 \; | \; W(x, y) = \gamma\}.$$	
	Since $W$ is homogeneous and positive definite, $M_\gamma$ is closed and bounded. In addition, $f_0$ is continuously differentiable and does not vanish on $M_\gamma$. Moreover, trajectories starting in $M_\gamma$ remain in $M_\gamma$ as 
	$$\langle \nabla W(x, y), f_{0}(x, y) \rangle = \sin(0) (x^2+y^2)^2 \|\nabla W(x, y)\|^2 = 0.$$
	Hence, by the Poincar\'e-Bendixson Criterion \cite[Lem 2.1]{Khalil:3rd.Ed}, the set $M$ contains a periodic orbit of $f_0$. 
	
	Let $z_1, z_2 \in M_\gamma$. We know that the trajectory starting from $z_1$ must visit $z_2$. Since $\dot p_0 \le 0$,  we must have $p_0(z_1) \le p_0(z_2)$. Similarly, we must also have $p_0(z_2) \le p_0(z_1)$, and therefore $$p_0(z_1) = p_0(z_2).$$

    Since we now know that $p_0$ is constant on the level sets of $W$, there must exist a function $g: \mathbb R \rightarrow \mathbb R$ such that
    $$p_0(x, y) = g(W(x, y)) =g\left(\frac{x^4+y^4}{x^2+y^2}\right).$$
    This proves that $$p_0(x, y) = p_0(x, -y) = p_0(-x, y) = p_0(-x, -y).$$
    Therefore, there exists a polynomial $p$ such that
    \begin{equation}p_0(x, y) = p(x^2, y^2) = g\left(\frac{x^4+y^4}{x^2+y^2}\right).
    \label{eqn:p.contradiction}\end{equation}
    Setting $y=0$, we get that $p(x^2, 0) = g(x^2)$.
    Hence, $ p(u, 0)=g(u)$ for all $u \ge 0$. Taking $$u = \frac{x^4+y^4}{x^2+y^2},$$ the second equality in (\ref{eqn:p.contradiction}) gives
    $$p(x^2, y^2) = p\left(\frac{x^4+y^4}{x^2+y^2}, 0\right).$$
    Setting $y = 1$, we get that the polynomial $p$ satisfies
     $$p(x^2, 1) = p\left(\frac{x^4+1}{x^2+1}, 0\right).$$
    If we let 
    $\tilde p(x) \coloneqq p(x, 1) \text{ and } \tilde q(x) \coloneqq p(x, 0),$
    then in view of Lemma \ref{lem:non_existence_poly} and the fact that $\tilde q$ is not constant, we have a contradiction.
\hspace{\fill}\QEDmark

\begin{example}

Consider the vector field $f_{\theta}$ in (\ref{eq:a.s.deg5.vec.arbitrary.high.poly.Lyap}) with $\theta = 0.05$. One  typical trajectory of this vector field is depicted in Figure \ref{fig:rational_vector_field}. We use the modeling language YALMIP \cite{yalmip} and the SDP solver MOSEK \cite{mosek} to search for rational and polynomial Lyapunov functions for this vector field.

\begin{figure}[h]\label{fig:rational_vector_field}
  \vspace{-.5cm}
  \centering
  \includegraphics[width=0.4\textwidth]{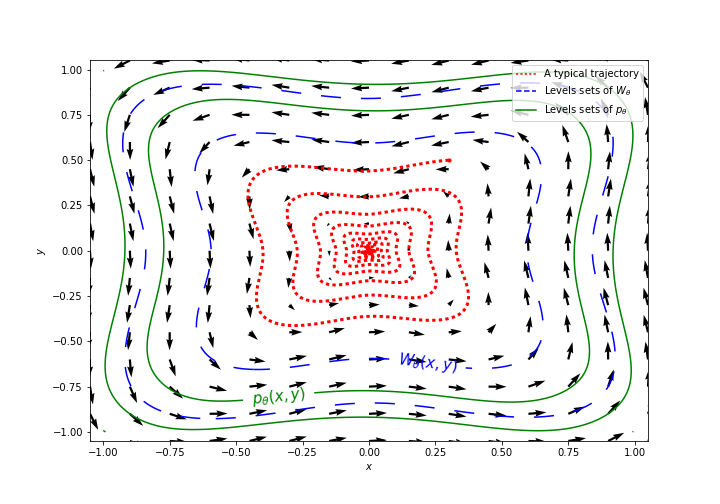}
  \vspace{-.5cm}
  \caption{A typical trajectory of the vector field $f_{\theta}$ in (\ref{eq:a.s.deg5.vec.arbitrary.high.poly.Lyap}) with $\theta = 0.05$, together with the level sets of the Lyapunov functions $W_\theta$ and $p_\theta$.}
\end{figure}

We know that for $\theta=0.05$, the vector field is asymptotically stable. Therefore, by Theorem \ref{thm:rat_hom_sdp}, the semidefinite programming \bachir{hierarchy} described in Section \ref{sec:sdp_for_rat_lyap} is guaranteed to find a rational Lyapunov function. The first round to succeed corresponds to $(s, r) = (4, 1)$, and produces the feasible solution
  \begin{align*}
    W_{\theta}(x, y) &= \frac{16.56x^4+16.56y^4+0.04x^2y^20.17x^3y-0.17xy^3}{x^2+y^2}
  \end{align*}
  If we look instead for a polynomial Lyapunov function, i.e. $r = 0$, the lowest degree for which the underlying SDP is feasible corresponds to $s=8$. The Lyapunov function that our solver returns is the following polynomial:
  \begin{align*}
    p_{\theta}(x, y) = 42.31x^8+42.31y^8+6.5xy^7-6.5x^7y-100.94x^2y^6
    \\-100.94x^6y^2+19.86x^5y^3-19.86x^3y^5+166.65 x^4y^4.
  \end{align*}
As all bivariate nonnegative homogeneous polynomials are sums of squares, infeasibility of our SDP for $s = 2, 4, 6$ means that $f_{\theta}$ admits no homogeneous polynomial Lyapunov function of degree lower than $8$. Two level sets of $W_{\theta}$ and $p_{\theta}$ are shown in Figure \ref{fig:rational_vector_field} and they look quite similar.
\end{example}


\section{Conclusions and future directions}
\label{sec:conclusion}
We showed in this paper that existence of a rational Lyapunov function is necessary and sufficient for asymptotic stability of homogeneous continuously differentiable vector fields. In the case where the vector field is polynomial, we constructed an SDP hiearachy that is guaranteed to find this Lyapunov function. The number of variables and constraints in this SDP hiearachy depend only on $s$, the degree of the numerator of the candidate Lyapunov function, and not on $r$, the degree of its denominator. To our knowledge, this theorem constitutes one of the few results in the theory of nonlinear dynamical systems which guarantees existence of algebraic certificates of stability that can be found by convex optimization (in fact, the only one we know of which applies to polynomial vector fields that are not exponentially stable). Regarding degree bounds, we proved that even for homogeneous polynomial vector fields of degree 3 on the plane, the degree $s$ of the numerator of such a rational Lyapunov function might need to be arbitrarily high. We also gave a family of homogeneous polynomial vector fields of degree 5 on the plane that all share a simple low-degree rational Lyapunov function, but require polynomial Lyapunov functions of arbitrarily high degree. Therefore, there are asymptotically stable polynomial vector fields for which a search for a rational Lyapunov function is much cheaper than a search for a polynomial one.
We leave the following two questions for future research:

\begin{itemize}
\item Can $r$ be upperbounded by a computable function of the coefficients of the vector field $f$? In particular, can $r$ always be taken to be zero? Or equivalently, do asymptotically stable homogeneous vector fields always admit a homogeneous polynomial Lyapunov function?
\item Similarily, can $s$ be upperbounded as a computable function of the coefficients of the vector field $f$? We have shown that $s$ cannot be upperbounded by a function of the dimension $n$ and the degree $d$ of the vector field only.
\end{itemize}

\aaa{Finally, while our focus in this paper was on analysis problems, we hope that our work also motivates further research on understanding the power and limitations of rational Lyapunov functions for \emph{controller design} problems.}


\section*{Acknowledgments} We are grateful to Tin Nguyen, the anonymous referees and the associate editor for several insightful comments.

\bibliographystyle{unsrt}
\bibliography{used_citations}

\end{document}